\newtheorem{dfn}{Definition}[section]
\newtheorem{thm}[dfn]{Theorem}
\newtheorem{lem}[dfn]{Lemma}
\newtheorem{cor}[dfn]{Corollary}
\newtheorem{remark}[dfn]{Remark}
\numberwithin{equation}{section}
\title[Inverse flux problems]{Stability for an inverse flux and an inverse boundary coefficient problems}
\author{Mourad Choulli}
\address{Universit\'{e} de Lorraine, 34 cours L\'{e}opold, 54052 Nancy cedex, France}
\email{mourad.choulli@univ-lorraine.fr}
\author{Shuai Lu}
\address{School of Mathematical Sciences, Fudan University, 220 Handan Road, Shanghai 200433, China}
\email{slu@fudan.edu.cn}
\author{Hiroshi Takase}
\address{Institute of Mathematics for Industry, Kyushu University, 744 Motooka, Nishi-ku, Fukuoka 819-0395, Japan}
\email{htakase@imi.kyushu-u.ac.jp}
\date{\today}
\keywords{Inverse flux problem, inverse boundary coefficient problem, quantitative uniqueness of continuation, elliptic BVPs.}
\subjclass[2020]{35R30, 35R25, 35J15, 58J05}
\begin{document}
\begin{abstract}
We establish both Lipschitz and logarithmic stability estimates for an inverse flux problem and subsequently apply these results to an inverse boundary coefficient problem. Furthermore, we demonstrate how the stability inequalities derived for the inverse boundary coefficient problem can be utilized in solving an inverse corrosion problem. This involves determining the unknown corrosion coefficient on an inaccessible part of the boundary based on measurements taken on the accessible part of the boundary.
\end{abstract}

\maketitle

\section{Introduction and main results}\label{se1}

Let $n \geq 2$ be an integer. Throughout this text, we adopt the Einstein summation convention for quantities involving indices. If any index appears twice in any term, once as an upper index and once as a lower index, it is implied that a summation over that index is performed from $1$ to $n$.

In the current work, we assume that all functions in Sections \ref{se1} and \ref{se2} take complex values, whereas the discussion in Section \ref{se3} is based on real-valued functions.

Let $(g_{jk}) \in W^{1,\infty}(\mathbb{R}^n; \mathbb{R}^{n \times n})$ be a symmetric matrix-valued function satisfying, for some $\theta > 0$,
\[
g_{jk}(x)\xi^j\xi^k \geq \theta|\xi|^2 \quad \text{for all } x, \xi \in \mathbb{R}^n.
\]
Note that $(g^{jk})$, the matrix inverse to $g$, is also uniformly positive definite. Recall that the Laplace-Beltrami operator associated with the metric tensor $g = g_{jk}dx^j \otimes dx^k$ is given by
\[
\Delta_g u := \frac{1}{\sqrt{|g|}}\partial_j\left(\sqrt{|g|}g^{jk}\partial_k u\right),
\]
where $|g| = \det(g)$.

Let $B\subset\mathbb{R}^n$ be a smooth bounded open set with boundary $\mathcal{S}:=\partial B$ and set $U:=\mathbb{R}^n\setminus\overline{B}$. For convenience, we recall the following usual notations
\begin{align*}
&\langle X,Y\rangle=g_{jk}X^jY^k,\quad X=X^j\frac{\partial}{\partial x_j},\; Y=Y^j\frac{\partial}{\partial  x_j},
\\
&\nabla_g w=g^{jk}\partial_j w\frac{\partial}{\partial x_k},\quad w\in H^1(U),
\\
&|\nabla_g w|_g^2=\langle\nabla_g w,\nabla_g \overline{w}\rangle=g^{jk}\partial_j w\partial_k \overline{w}, \quad w\in H^1(U),
\\
&\nu_g=(\nu_g)^j\frac{\partial}{\partial x_j},\quad (\nu_g)^j=\frac{g^{jk}\nu_k}{\sqrt{g^{\ell m}\nu_\ell \nu_m}},
\\
&\partial_{\nu_g}w=\langle \nabla_g w,\nu_g\rangle, \quad w\in H^1(U),
\end{align*}
where $\nu$ denotes the outer unit normal vector field on $\mathcal{S}$. Additionally, we define the tangential gradient $\nabla_{\tau_g} w$ with respect to $g$ by
\[
\nabla_{\tau_g} w := \nabla_g w - (\partial_{\nu_g} w)\nu_g.
\]
We verify that $|\nabla_{\tau_g} w|_g^2=|\nabla_g w|_g^2-|\partial_{\nu_g} w|^2$. 


Next, consider for $t \in (0,\infty)\setminus\mathbb{N}$ the space
\[
H^t(\mathbb{R}^{n-1}) := \{ w \in \mathscr{S}'; \; (1 + |\xi|^2)^{t/2}\widehat{w} \in L^2(\mathbb{R}^{n-1}) \}
\]
equipped with the norm 
\[
\| w \|_{H^t(\mathbb{R}^{n-1})} := \left\| (1 + |\xi|^2)^{t/2}\widehat{w} \right\|_{L^2(\mathbb{R}^{n-1})}, 
\]
where $\mathscr{S}'$ denotes the space of tempered distributions on $\mathbb{R}^{n-1}$ and $\widehat{w}$ represents the Fourier transform of $w$. Utilizing local charts and a partition of unity, we construct $H^t(\mathcal{S})$ from $H^t(\mathbb{R}^{n-1})$. For further details, readers are referred to \cite[Section 2.2]{Agranovich2015}.
It follows from \cite[Theorem 1.9.2 and Theorem 2.3.6]{Agranovich2015} that  the multiplication by $q \in C^{\lfloor t\rfloor,1}(\mathcal{S})$, where $\lfloor t\rfloor$ denotes the largest integer not exceeding $t$, defines a bounded operator on $H^t(\mathcal{S})$ with
\begin{equation}\label{boundedness}
\| qu \|_{H^t(\mathcal{S})} \leq \mathbf{c}_0 \| q \|_{C^{\lfloor t\rfloor,1}(\mathcal{S})} \| u \|_{H^t(\mathcal{S})},
\end{equation}
where $\mathbf{c}_0 = \mathbf{c}_0(n,t,\mathcal{S})>0$ is a constant.

Assume that $p\in L^\infty(U)$ and $q\in C^{0,1}(\mathcal{S})$ such that $\Re q\ge 0$. Then we consider the exterior boundary-value problem
\begin{equation}\label{BVP}
\begin{cases}
Pu:=-\Delta_gu+pu=f\quad &\text{in}\; U,
\\
\partial_{\nu_g} u+q u=\mathfrak{a}\quad &\text{on}\; \mathcal{S},
\end{cases}
\end{equation}
where $f\in L^2(U)$ and $\mathfrak{a}\in L^2(\mathcal{S})$. According to Lemma \ref{well-posedness} below, under the assumption $\Re p\ge \delta$, where $\delta>0$ is a constant, \eqref{BVP} has a unique solution $u=u(f,\mathfrak{a})\in H^1(U)$. Furthermore, if $\mathfrak{a}\in H^{1/2}(\mathcal{S})$, then $u(f,\mathfrak{a})\in H^2(W\cap U)$ for any bounded open set $W\Supset B$ by Lemma \ref{well-posedness}.

Let $\Omega\Supset B$ be a $C^{0,1}$ bounded domain such that $\Omega\setminus\overline{B}$ is connected with boundary $\Gamma:=\partial\Omega$. We consider \eqref{BVP} for the case $f=0$ and are interested in the inverse flux problem of determining the unknown function $\mathfrak{a}$ from the knowledge of $(u(0,\mathfrak{a})_{|\Gamma},\partial_{\nu_g} u(0,\mathfrak{a})_{|\Gamma})\in H^{3/2}(\Gamma)\times H^{1/2}(\Gamma)$, where $\nu_g$ denotes the outer unit normal to $\Gamma$ with respect to $g$. We refer to Figure \ref{fig:domain} for an illustration of the domain related to the exterior boundary-value problem \eqref{BVP}, as well as the associated measurement setup.

\begin{figure}[htbp]
\centering
\begin{tikzpicture}
\coordinate (A) at (-4,-2.2);
\coordinate (B) at (4.2,2.2); 
\fill[gray, opacity=0.3] (A) rectangle (B);   
\draw[fill=gray, fill opacity=0.5, line width=1pt] (0.4,0) ellipse (3cm and 1.5cm); 
\draw[fill=white] (0,0) ellipse (2cm and 1cm); 

\node at (0.3,0.5) {$B$};
\node at (1.7,0.2) {$\mathcal{S}$};
\node at (2.5,0.5) {$\Omega$};    
\node at (3.2,0.2) {$\Gamma$};
\node at (-2,1.2) {$U$};

\draw[->] (-3.5,0) -- (4,0); 
\draw[->] (0,-2) -- (0,2); 
\foreach \x in {-2,0,2}
\draw (\x,-0.1) -- (\x,0.1); 
\foreach \y in {-1,1}
\draw (-0.1,\y) -- (0.1,\y); 
\end{tikzpicture}
\caption{Illustration of the domain for Equation \eqref{BVP}. $\mathcal{S}:=\partial B$, $B \Subset \Omega$, $\Gamma:=\partial \Omega$ and  $U:=\mathbb{R}^n\setminus\overline{B}$. $\Gamma$ is an accessible and $\mathcal{S}$ is an inaccessible part of the boundary. }
\label{fig:domain}
\end{figure}

The volume form in $U$ with respect to the metric $g$ will be denoted by $dV_g := \sqrt{|g|} \, dx$, and the surface measure on $\mathcal{S} \cup \Gamma$ with respect to the induced metric from $g$ will be denoted by $dS_g$. We define the functional
\[
\mathcal{C}(w) := \|w\|_{H^1(\Gamma)} + \|\partial_{\nu_g} w\|_{L^2(\Gamma)}, \quad w \in H^2(\Omega\setminus\overline{B}),
\]
where the norms $\|w\|_{L^2(\Gamma)}$ and $\|w\|_{H^1(\Gamma)}$ are defined as follows:
\begin{align*}
\|w\|_{L^2(\Gamma)} &:= \left( \int_\Gamma |w|^2 \, dS_g \right)^{1/2}, \\
\|w\|_{H^1(\Gamma)} &:= \left( \int_\Gamma |w|^2 \, dS_g + \int_\Gamma |\nabla_{\tau_g} w|^2 \, dS_g \right)^{1/2}.
\end{align*}
For simplicity, we use hereinafter the notation $\zeta:=(g,p,q,B,\Omega)$.

We aim to prove the following two theorems. 

\begin{thm}\label{thm_single_log_ifp}
Let $0<\eta<1/4$. Then there exist constants $\mathbf{c}=\mathbf{c}(\zeta,\eta)>0$ and $c=c(\zeta,\eta)>0$ such that for all $\mathfrak{a}\in H^{1/2}(\mathcal{S})$ and $s\ge 1$ there holds
\[
\mathbf{c}\|\mathfrak{a}\|_{L^2(\mathcal{S})}\le e^{cs}\mathcal{C}(u(0,\mathfrak{a}))+s^{-\eta}\|\mathfrak{a}\|_{H^{1/2}(\mathcal{S})}.
\]
\end{thm}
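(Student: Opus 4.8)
The plan is to reduce the estimate to a quantitative unique continuation across the annular region $\Omega\setminus\overline{B}$, after first transferring the unknown $\mathfrak{a}$ to the Cauchy data of $u$ on the inaccessible boundary $\mathcal{S}$. Writing $u:=u(0,\mathfrak{a})$ and using the Robin condition $\partial_{\nu_g}u+qu=\mathfrak{a}$ on $\mathcal{S}$ together with the multiplier bound \eqref{boundedness}, I would first observe that
\[
\|\mathfrak{a}\|_{L^2(\mathcal{S})}\le \|\partial_{\nu_g}u\|_{L^2(\mathcal{S})}+\|q\|_{C^{0,1}(\mathcal{S})}\|u\|_{L^2(\mathcal{S})}\le C\big(\|u\|_{H^1(\mathcal{S})}+\|\partial_{\nu_g}u\|_{L^2(\mathcal{S})}\big).
\]
Hence it suffices to control the full Cauchy data $(u,\partial_{\nu_g}u)$ of $u$ on $\mathcal{S}$ by the measured Cauchy data $\mathcal{C}(u)$ on $\Gamma$, with the prescribed $s$-dependence.

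Next I would record the a priori bound supplied by well-posedness. Since $f=0$ and $\mathfrak{a}\in H^{1/2}(\mathcal{S})$, Lemma~\ref{well-posedness} gives $u\in H^2(\Omega\setminus\overline{B})$ with $\|u\|_{H^2(\Omega\setminus\overline{B})}\le C\|\mathfrak{a}\|_{H^{1/2}(\mathcal{S})}$, so by the trace theorem the Cauchy data of $u$ on $\mathcal{S}$ is controlled in the sharp spaces,
\[
\|u\|_{H^{3/2}(\mathcal{S})}+\|\partial_{\nu_g}u\|_{H^{1/2}(\mathcal{S})}\le C\|\mathfrak{a}\|_{H^{1/2}(\mathcal{S})}.
\]
This furnishes exactly the regular norm that should appear, after a smoothing gain $s^{-\eta}$, as the second term on the right-hand side of the claimed inequality.

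The core of the argument, and the main obstacle, is the quantitative unique continuation for the elliptic operator $P=-\Delta_g+p$ in the bounded annular domain $\Omega\setminus\overline{B}$, propagating from the outer boundary $\Gamma$ to the inner boundary $\mathcal{S}$. Concretely, I would aim to prove that for every solution $v\in H^2(\Omega\setminus\overline{B})$ of $Pv=0$ and every $s\ge 1$,
\[
\|v\|_{H^1(\mathcal{S})}+\|\partial_{\nu_g}v\|_{L^2(\mathcal{S})}\le Ce^{cs}\,\mathcal{C}(v)+Cs^{-\eta}\big(\|v\|_{H^{3/2}(\mathcal{S})}+\|\partial_{\nu_g}v\|_{H^{1/2}(\mathcal{S})}\big).
\]
The natural engine is a Carleman estimate for $P$ with a weight $\varphi$ free of critical points that foliates the annulus from $\Gamma$ to $\mathcal{S}$, the large Carleman parameter $\tau\sim s$ playing the role of the free parameter; since $Pv=0$, the interior term is absorbed and one is left with a comparison of the weighted Cauchy data on $\Gamma$ and on $\mathcal{S}$. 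The factor $e^{cs}$ reflects the oscillation of the weight across the region, while the polynomial gain $s^{-\eta}$ is produced by interpolating the high-regularity a priori Cauchy data on $\mathcal{S}$ against the Carleman bound. Two delicate points deserve emphasis: first, the metric is only $W^{1,\infty}$, so one must invoke a Carleman estimate valid for Lipschitz principal coefficients; second, recovering the \emph{normal derivative} $\partial_{\nu_g}v$ in $L^2(\mathcal{S})$ — not merely $v$ — costs a half derivative at the boundary, and it is precisely this interpolation, trading the $H^{1/2}(\mathcal{S})$ a priori control of $\partial_{\nu_g}v$ down to $L^2(\mathcal{S})$, that caps the admissible exponent at $\eta<1/4$.

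Finally I would assemble the pieces: applying the unique continuation inequality to $v=u$, inserting the a priori bound to replace the regular Cauchy-data norm by $C\|\mathfrak{a}\|_{H^{1/2}(\mathcal{S})}$, and combining with the reduction from the Robin condition in the first step, gives
\[
\|\mathfrak{a}\|_{L^2(\mathcal{S})}\le Ce^{cs}\,\mathcal{C}(u(0,\mathfrak{a}))+Cs^{-\eta}\|\mathfrak{a}\|_{H^{1/2}(\mathcal{S})},
\]
which is the asserted estimate after relabelling the constants as $\mathbf{c}$ and $c$. I expect the well-posedness and boundary-condition steps to be routine, with essentially all the difficulty concentrated in establishing the Carleman-based unique continuation with the correct simultaneous $e^{cs}$ growth and $s^{-\eta}$ decay and in the boundary-derivative recovery that forces $\eta<1/4$.
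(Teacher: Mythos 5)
Your outline follows the same route as the paper: reduce $\|\mathfrak{a}\|_{L^2(\mathcal{S})}$ to the Cauchy data of $u$ on $\mathcal{S}$ via the Robin condition, bound $\|u\|_{H^2(\Omega\setminus\overline{B})}$ by $\|\mathfrak{a}\|_{H^{1/2}(\mathcal{S})}$ using the well-posedness lemma, and then invoke a quantitative unique continuation estimate propagating Cauchy data from $\Gamma$ to $\mathcal{S}$ with an $e^{cs}$ observation term and an $s^{-\eta}$ remainder. The one place where you diverge is also the only place where anything nontrivial happens: the paper does \emph{not} prove the unique continuation step but imports it from \cite[Theorem 1.1]{CT2024b}, in the form of an interior bound $\mathbf{c}\|u\|_{H^\eta(D)}\le e^{cs}\mathcal{C}(u)+s^{-(2-\eta)/2}\|u\|_{H^2(D)}$ for $3/2<\eta<2$, and then passes to the boundary by the trace theorem $H^\eta(D)\to L^2(\mathcal{S})\times L^2(\mathcal{S})$; the constraint $\eta>3/2$ needed for the normal-derivative trace is exactly what caps the gain exponent $(2-\eta)/2$ below $1/4$, consistent with your heuristic. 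Your version states the unique continuation inequality directly as a boundary-to-boundary estimate and proposes to establish it by a Carleman estimate with a critical-point-free weight plus interpolation; that program is plausible and is indeed how such results are proved, but as written it is only a sketch, so your argument is complete only modulo either carrying out that Carleman analysis (nontrivial, especially with merely Lipschitz coefficients) or citing the known result as the paper does. Everything else in your proposal is routine and matches the paper step for step.
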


Let
\[
\Phi_{\eta,c} (r)=r^{-1}\chi_{]0,e^c]}(r)+(\log r)^{-\eta}\chi_{]e^c,\infty[}(r),\quad 0<\eta<1/4,\; c>0.
\]
Here $\chi_J$ denotes the characteristic function of the interval $J$.

By minimizing the interpolation inequality of Theorem \ref{thm_single_log_ifp} with respect to $s \geq 1$, we derive the following corollary.
\begin{cor}
Let $0<\eta<1/4$. There exist constants $\mathbf{c}=\mathbf{c}(\zeta,\eta)>0$ and $c=c(\zeta,\eta)>0$ such that for all $\mathfrak{a}\in H^{1/2}(\mathcal{S})\setminus\{0\}$  there holds the inequality
\[
\mathbf{c}\|\mathfrak{a}\|_{L^2(\mathcal{S})}\le \Phi_{\eta,c} \left(\|\mathfrak{a}\|_{H^{1/2}(\mathcal{S})}/\mathcal{C}(u(0,\mathfrak{a}))\right)\|\mathfrak{a}\|_{H^{1/2}(\mathcal{S})}.
\]
\end{cor}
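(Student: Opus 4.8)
The plan is to read the inequality of Theorem~\ref{thm_single_log_ifp} as a one-parameter family of bounds and to optimize the free parameter $s\ge 1$ by two explicit, case-dependent choices rather than by a genuine minimization. Write $\mathbf{c}_1,c_1$ for the constants furnished by that theorem and abbreviate $A:=\mathcal{C}(u(0,\mathfrak{a}))$ and $N:=\|\mathfrak{a}\|_{H^{1/2}(\mathcal{S})}$. First I would record that $A>0$ whenever $\mathfrak{a}\ne 0$: if $A=0$, then Theorem~\ref{thm_single_log_ifp} gives $\mathbf{c}_1\|\mathfrak{a}\|_{L^2(\mathcal{S})}\le s^{-\eta}N$ for every $s\ge 1$, and letting $s\to\infty$ (so that $s^{-\eta}\to 0$) forces $\mathfrak{a}=0$. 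Hence $r:=N/A$ is a well-defined element of $(0,\infty)$, and the theorem reads
\[
\mathbf{c}_1\|\mathfrak{a}\|_{L^2(\mathcal{S})}\le A\bigl(e^{c_1 s}+r\,s^{-\eta}\bigr),\qquad s\ge 1.
\]
I will prove the corollary with the choice $c:=2c_1$ and a constant $\mathbf{c}$ to be fixed at the end, splitting according to whether $r\le e^{c}$ or $r>e^{c}$.

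In the regime $r\le e^{c}$, where $\Phi_{\eta,c}(r)=r^{-1}$, I would simply take $s=1$. Since then $N\le e^{c}A$, this yields
\[
\mathbf{c}_1\|\mathfrak{a}\|_{L^2(\mathcal{S})}\le e^{c_1}A+N\le (e^{c_1}+e^{c})A=(e^{c_1}+e^{c})\,r^{-1}N=(e^{c_1}+e^{c})\,\Phi_{\eta,c}(r)\,N,
\]
which is the asserted inequality up to the multiplicative constant $e^{c_1}+e^{c}$.

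In the regime $r>e^{c}=e^{2c_1}$, where $\Phi_{\eta,c}(r)=(\log r)^{-\eta}$, I would choose $s:=\tfrac{1}{2c_1}\log r$; the hypothesis $\log r>2c_1$ guarantees $s>1$, so this choice is admissible, and it is precisely this admissibility requirement that dictates enlarging the threshold of $\Phi_{\eta,c}$ to $e^{2c_1}$. With this $s$ one has $e^{c_1 s}=r^{1/2}$, whence $A\,e^{c_1 s}=N r^{-1/2}$, while $A\,r\,s^{-\eta}=(2c_1)^{\eta}(\log r)^{-\eta}N$, so that
\[
\mathbf{c}_1\|\mathfrak{a}\|_{L^2(\mathcal{S})}\le N r^{-1/2}+(2c_1)^{\eta}(\log r)^{-\eta}N.
\]
The remaining, and essentially only, analytic point is to absorb the first term into the second: since $\eta>0$, the map $r\mapsto r^{-1/2}(\log r)^{\eta}$ is continuous on $(1,\infty)$ and tends to $0$ at both endpoints, hence is bounded by some $C_\eta<\infty$, giving $r^{-1/2}\le C_\eta(\log r)^{-\eta}$ for all $r>1$. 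This produces $\mathbf{c}_1\|\mathfrak{a}\|_{L^2(\mathcal{S})}\le (C_\eta+(2c_1)^{\eta})\,\Phi_{\eta,c}(r)\,N$.

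Combining the two regimes and setting $\mathbf{c}:=\mathbf{c}_1/\max\{e^{c_1}+e^{c},\,C_\eta+(2c_1)^{\eta}\}$ yields the stated estimate, with $\mathbf{c}$ and $c=2c_1$ depending only on $\zeta$ and $\eta$. The main obstacle is thus the elementary comparison $r^{-1/2}\le C_\eta(\log r)^{-\eta}$ (power beats log), everything else being the bookkeeping of the two cases and the admissibility check $s\ge 1$. I emphasize that no true optimization over $s$ is required: the explicit choices $s=1$ and $s=\tfrac{1}{2c_1}\log r$ already realize the optimal logarithmic rate $(\log r)^{-\eta}$, and a sharper transcendental minimizer would only improve the constants.
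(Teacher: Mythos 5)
Your proposal is correct and follows exactly the route the paper indicates (and leaves implicit): optimizing the parameter $s\ge 1$ in the interpolation inequality of Theorem~\ref{thm_single_log_ifp}, which in practice amounts to the two case-dependent choices $s=1$ and $s=\tfrac{1}{2c_1}\log r$ that you make. The preliminary observation that $\mathcal{C}(u(0,\mathfrak{a}))>0$ for $\mathfrak{a}\ne 0$, the admissibility check $s\ge 1$ forcing the threshold $e^{2c_1}$, and the elementary bound $r^{-1/2}\le C_\eta(\log r)^{-\eta}$ are all sound, so the argument is complete.
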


If we enhance the {\it a priori} assumption of the unknown flux function $\mathfrak{a}$, the aforementioned logarithmic stability estimate can be upgraded to a Lipschitz stability estimate. To achieve this, we fix $M > 0$ and define the set
\[
\mathcal{A}=\{\mathfrak{a}\in H^1(\mathcal{S});\; \|\nabla_{\tau_g}\mathfrak{a}\|_{L^2(\mathcal{S})}\le M\|\mathfrak{a}\|_{L^2(\mathcal{S})}\},
\]
where the norms $\|w\|_{L^2(\mathcal{S})}$ and $\|\nabla_{\tau_g}w\|_{L^2(\mathcal{S})}$ are defined similarly to $\|w\|_{L^2(\Gamma)}$. Roughly speaking, $\mathcal{A}$ imposes a constraint on the high-frequency components. Indeed, as shown in Corollary \ref{cor_finite_dim}, finite-dimensional subspaces are contained in $\mathcal{A}$.

In a subsequent theorem, we demonstrate the Lipschitz stability estimate when $\mathfrak{a}$ belongs to the set $\mathcal{A}$.

\begin{thm}\label{thm_Lipschitz_ifp} 
There exists a constant $\mathbf{c} = \mathbf{c}(\zeta, M)>0$ such that for any $\mathfrak{a}\in\mathcal{A}$ the following inequality is satisfied:
\[
\mathbf{c}\|\mathfrak{a}\|_{H^1(\mathcal{S})}\le \mathcal{C}(u(0,\mathfrak{a})).
\]
\end{thm}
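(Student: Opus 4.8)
The plan is to deduce the Lipschitz estimate directly from the logarithmic interpolation inequality of Theorem \ref{thm_single_log_ifp} by exploiting the constraint defining $\mathcal{A}$, which controls the high-frequency part of $\mathfrak{a}$. The crucial observation is that on $\mathcal{A}$ the three norms $\|\cdot\|_{L^2(\mathcal{S})}$, $\|\cdot\|_{H^{1/2}(\mathcal{S})}$ and $\|\cdot\|_{H^1(\mathcal{S})}$ are all equivalent, with constants depending only on $M$ (and on $\zeta$ through $\mathcal{S}$). Once this equivalence is in hand, the fractional Sobolev term $s^{-\eta}\|\mathfrak{a}\|_{H^{1/2}(\mathcal{S})}$ appearing in Theorem \ref{thm_single_log_ifp} can be bounded by $s^{-\eta}\|\mathfrak{a}\|_{L^2(\mathcal{S})}$ up to a multiplicative constant, hence absorbed into the left-hand side for $s$ chosen large enough.

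First I would record the norm equivalence. By the very definition of $\mathcal{A}$ one has
\[
\|\mathfrak{a}\|_{H^1(\mathcal{S})}^2 = \|\mathfrak{a}\|_{L^2(\mathcal{S})}^2 + \|\nabla_{\tau_g}\mathfrak{a}\|_{L^2(\mathcal{S})}^2 \le (1+M^2)\|\mathfrak{a}\|_{L^2(\mathcal{S})}^2,
\]
so that $\|\mathfrak{a}\|_{H^1(\mathcal{S})} \le (1+M^2)^{1/2}\|\mathfrak{a}\|_{L^2(\mathcal{S})}$. Combining this with the standard interpolation inequality $\|\mathfrak{a}\|_{H^{1/2}(\mathcal{S})} \le C\|\mathfrak{a}\|_{L^2(\mathcal{S})}^{1/2}\|\mathfrak{a}\|_{H^1(\mathcal{S})}^{1/2}$ on the compact manifold $\mathcal{S}$ yields a constant $C_M = C_M(\zeta,M)>0$ with
\[
\|\mathfrak{a}\|_{H^{1/2}(\mathcal{S})} \le C_M\|\mathfrak{a}\|_{L^2(\mathcal{S})}, \quad \mathfrak{a}\in\mathcal{A}.
\]

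Next I would invoke Theorem \ref{thm_single_log_ifp} with a fixed value of the parameter, say $\eta=1/8$, obtaining constants $\mathbf{c}, c>0$ depending only on $\zeta$. Inserting the previous bound gives, for every $s\ge 1$,
\[
\mathbf{c}\|\mathfrak{a}\|_{L^2(\mathcal{S})} \le e^{cs}\mathcal{C}(u(0,\mathfrak{a})) + C_M s^{-\eta}\|\mathfrak{a}\|_{L^2(\mathcal{S})}.
\]
Choosing $s=s_0\ge 1$ so large that $C_M s_0^{-\eta} \le \mathbf{c}/2$ — which is possible because $C_M$, $\mathbf{c}$ and $\eta$ depend only on $\zeta$ and $M$ — the second term is absorbed into the left-hand side, leaving $(\mathbf{c}/2)\|\mathfrak{a}\|_{L^2(\mathcal{S})} \le e^{cs_0}\mathcal{C}(u(0,\mathfrak{a}))$. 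Using once more $\|\mathfrak{a}\|_{H^1(\mathcal{S})} \le (1+M^2)^{1/2}\|\mathfrak{a}\|_{L^2(\mathcal{S})}$ then produces the asserted estimate with a new constant depending only on $\zeta$ and $M$.

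The argument is elementary once Theorem \ref{thm_single_log_ifp} is available; the only genuine point is the absorption step, which hinges on the fact that the constraint built into $\mathcal{A}$ renders $\|\mathfrak{a}\|_{H^{1/2}(\mathcal{S})}$ comparable to $\|\mathfrak{a}\|_{L^2(\mathcal{S})}$ uniformly over $\mathcal{A}$, thereby neutralizing the logarithmic loss. I expect no serious obstacle beyond verifying that the interpolation inequality holds with a constant independent of $\mathfrak{a}$ on the compact boundary $\mathcal{S}$, which is classical.
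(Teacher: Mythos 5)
Your argument is correct and follows essentially the same route as the paper: apply Theorem \ref{thm_single_log_ifp} with a fixed $\eta$, use the constraint defining $\mathcal{A}$ to bound $\|\mathfrak{a}\|_{H^{1/2}(\mathcal{S})}$ by a multiple of $\|\mathfrak{a}\|_{L^2(\mathcal{S})}$, absorb that term by taking $s$ large, and convert back to the $H^1(\mathcal{S})$ norm via the same constraint. The only cosmetic difference is that you pass through the interpolation inequality $\|\mathfrak{a}\|_{H^{1/2}(\mathcal{S})}\le C\|\mathfrak{a}\|_{L^2(\mathcal{S})}^{1/2}\|\mathfrak{a}\|_{H^1(\mathcal{S})}^{1/2}$, whereas the paper simply uses the embedding $\|\mathfrak{a}\|_{H^{1/2}(\mathcal{S})}\le \|\mathfrak{a}\|_{H^1(\mathcal{S})}$.
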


\begin{remark}{\rm
The same results are valid for the interior case as well. More specifically, let $B$ be a smooth bounded domain and $\Omega \Subset B$ be a $C^{0,1}$ bounded open subset such that $B \setminus \overline{\Omega}$ is connected. When measurements are taken on $\Gamma := \partial \Omega$, we can establish the same stability inequalities as those presented in Theorem \ref{thm_single_log_ifp} and Theorem \ref{thm_Lipschitz_ifp}. The interior problem of this nature has been addressed in Choulli-Takase \cite{Choulli2025}.
}\end{remark}

To the best of our knowledge, these results provide stability inequalities for the inverse problems we consider for the first time. Both Theorems \ref{thm_single_log_ifp} and \ref{thm_Lipschitz_ifp} remain valid by replacing the operator $P$, appearing in \eqref{BVP}, by $P$ plus a first-order operator, provided that the corresponding boundary-value problems admit a unique solution in $H^1(U)\cap H^2(W\cap U)$ for any bounded open set $W\Supset B$.

\section{Proof of the main results}\label{se2}
In this section, we aim to prove the aforementioned two primary theorems. In the process, we will require the following lemma, which will be utilized subsequently.

\begin{lem}\label{well-posedness}
Let $p\in L^\infty(U)$ and $q\in C^{0,1}(\mathcal{S})$ satisfying $\Re p\ge \delta$ for some $\delta>0$ and $\Re q\ge 0$. If $(f,\mathfrak{a})\in L^2(U)\times L^2(\mathcal{S})$, then there exists a unique solution $u:=u(f,\mathfrak{a})\in H^1(U)$ to \eqref{BVP} and there holds the estimate
\begin{equation}\label{H^1_estimate}
\mathbf{c}_1\|u\|_{H^1(U)}\le \left(\|f\|_{L^2(U)}+\|\mathfrak{a}\|_{L^2(\mathcal{S})}\right),
\end{equation}
where $\mathbf{c}_1=\mathbf{c}_1(g,p,B)>0$ is a constant.

If in addition to the above assumptions, $(g_{\ell m})\in W^{k+1,\infty}(\mathbb{R}^n; \mathbb{R}^{n\times n})$, 
$p\in W^{k,\infty}(U)$, $q\in C^{k,1}(\mathcal{S})$ and $(f,\mathfrak{a})\in H^k(U)\times H^{k+1/2}(\mathcal{S})$ for $k\in\mathbb{N}\cup\{0\}$, then $u\in H^{k+2}(W\cap U)$ for any bounded open set $W\Supset B$ and we have
\begin{equation}\label{H^k_estimate}
\mathbf{c}_2\|u\|_{H^{k+2}(W\cap U)}\le \left(\|f\|_{H^k(U)}+\|\mathfrak{a}\|_{H^{k+1/2}(\mathcal{S})}\right),
\end{equation}
where $\mathbf{c}_2=\mathbf{c}_2(g,p,q,k,B,W)>0$ is a constant.
\end{lem}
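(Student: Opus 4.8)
The plan is to treat the two assertions separately: the $H^1$ well-posedness by the (complex) Lax--Milgram lemma, and the higher regularity on bounded subsets by localization followed by standard elliptic estimates. For the first part I would introduce the sesquilinear form on $H^1(U)$,
\[
B(u,v) = \int_U \langle \nabla_g u, \nabla_g \overline{v}\rangle \, dV_g + \int_U p u \overline{v}\, dV_g + \int_{\mathcal{S}} q u \overline{v}\, dS_g,
\]
together with the antilinear functional $L(v) = \int_U f\overline{v}\,dV_g + \int_{\mathcal{S}}\mathfrak{a}\overline{v}\,dS_g$, and check that $u \in H^1(U)$ solves \eqref{BVP} weakly if and only if $B(u,v) = L(v)$ for all $v \in H^1(U)$. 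This equivalence follows from Green's formula on $U$, substituting the Robin condition $\partial_{\nu_g}u = \mathfrak{a} - qu$ on $\mathcal{S}$ (with the convention that $\nu$ is outward from $B$, so that the outward normal of $U$ is $-\nu$). Boundedness of $B$ and $L$ rests on the uniform bounds for $(g^{jk})$, on $\|p\|_{L^\infty(U)}$ and $\|q\|_{L^\infty(\mathcal{S})}$, and on the trace inequality $\|v\|_{L^2(\mathcal{S})} \le C\|v\|_{H^1(U)}$, which holds since $\mathcal{S}$ is compact.

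The decisive point is coercivity. Taking real parts and using $\Re p \ge \delta > 0$, $\Re q \ge 0$, together with the uniform positivity $g^{jk}\xi_j\xi_k \ge \theta'|\xi|^2$ of the inverse metric, one obtains
\[
\Re B(u,u) \ge \theta'\|\nabla u\|_{L^2(U)}^2 + \delta\|u\|_{L^2(U)}^2 \ge \min(\theta',\delta)\,\|u\|_{H^1(U)}^2 .
\]
I would stress that the strict positivity of $\Re p$ is precisely what compensates for the absence of a Poincar\'e inequality on the unbounded domain $U$: it renders the exterior problem coercive without any decay or radiation condition at infinity. Lax--Milgram then produces a unique $u$, and testing with $v = u$, combined with the bound on $L$, yields \eqref{H^1_estimate}.

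For the regularity statement, since extra smoothness is claimed only on bounded sets $W \cap U$, I would localize. Fix a bounded open set $V$ with $W \Subset V$ and a cutoff $\chi \in C_c^\infty(\mathbb{R}^n)$ equal to $1$ on $\overline{W}$ and supported in $V$. Then $\chi u$ solves, on the bounded domain $V \cap U$ (smooth near $\mathcal{S}$), an equation of the same type with right-hand side $\chi f$ plus commutator terms involving $\nabla\chi$ that are controlled by $\|u\|_{H^1(U)}$, the same Robin condition on $\mathcal{S}$, and a homogeneous condition where $\chi$ vanishes. The conormal derivative $\partial_{\nu_g}$ is a transversal first-order operator, so the Robin condition satisfies the Lopatinskii--Shapiro complementing condition and the Agmon--Douglis--Nirenberg boundary regularity theory applies on the bounded domain. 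For $k=0$ this gives $\chi u \in H^2$, hence $u \in H^2(W\cap U)$, and the a priori elliptic estimate combined with \eqref{H^1_estimate} gives \eqref{H^k_estimate}.

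The general case follows by induction on $k$, each step raising the regularity by one order and using the prescribed smoothness $g_{\ell m}\in W^{k+1,\infty}$, $p\in W^{k,\infty}$, $q\in C^{k,1}$; the multiplication bound \eqref{boundedness} is what keeps the boundary term $qu$ in $H^{k+1/2}(\mathcal{S})$ at each stage. The main obstacle is organizational rather than conceptual: one must track carefully the commutator terms generated by the cutoff and verify transversality of the conormal boundary operator so that boundary elliptic regularity is available. The unboundedness of $U$ plays no role in this second part, since every estimate is localized to $W \cap U$.
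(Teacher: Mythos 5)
Your proposal is correct and follows essentially the same route as the paper: Lax--Milgram for the coercive sesquilinear form (coercivity from $\Re p\ge\delta$ and $\Re q\ge 0$, with no Poincar\'e inequality needed), then cutoff localization and standard elliptic boundary regularity on the bounded set (the paper cites Grisvard's theorems rather than ADN theory) with induction on $k$. One small bookkeeping caveat: for the boundary term $\int_{\mathcal{S}}qu\overline{v}\,dS_g$ to enter the form with the $+$ sign you (correctly) wrote and which coercivity requires, $\nu_g$ in the Robin condition must be taken as the outward normal of the domain $U$ on $\mathcal{S}$ (pointing into $B$), whereas your parenthetical takes $\nu$ outward from $B$, which would flip that sign.
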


\begin{proof}
When $\Re p\ge \delta>0$, the sesquilinear form
\[
h(u,v):=\int_U(\langle\nabla_g u,\nabla_g\overline{v}\rangle+p u\overline{v})dV_g+\int_\mathcal{S}qu\overline{v}dS_g,\quad u,v\in H^1(U)
\]
is bounded and coercive on $H^1(U)$. Consequently, for all $(f,\mathfrak{a})\in L^2(U)\times L^2(\mathcal{S})$, Lax-Milgram lemma asserts the existence of a unique solution $u \in H^1(U)$ such that
\begin{equation}\label{VP}
h(u,v)=\int_U f\overline{v}dV_g+\int_\mathcal{S}\mathfrak{a}\overline{v}dS_g,\quad v\in H^1(U).
\end{equation}
By taking the real part of $h(u,u)$, we derive
\[
\mathbf{c}_1\|u\|_{H^1(U)}^2\le \|f\|_{L^2(U)}\|u\|_{L^2(U)}+\|\mathfrak{a}\|_{L^2(\mathcal{S})}\|u\|_{L^2(\mathcal{S})},
\]
where $\mathbf{c}_1=\mathbf{c}_1(g,p,B)>0$ is a constant. Since the trace mapping $H^1(U)\ni u\mapsto u_{|\mathcal{S}}\in L^2(\mathcal{S})$ is bounded, we obtain \eqref{H^1_estimate}.

Next, we provide the proof of \eqref{H^k_estimate} in the specific case where $k=0$. The general case can be derived by induction with respect to $k$. For a bounded open set $W\Supset B$, we choose a $C^{1,1}$ bounded open set $W_0\Supset W$. Let $\chi \in C_0^\infty (W_0)$ satisfying $\chi=1$ in a neighborhood of $W$. According to \cite[Theorem 2.4.2.7]{Grisvard1985}, we have $\chi u\in H^2(W_0\cap U)$, where $u$ denotes the solution to \eqref{VP}. By applying \cite[Theorem 2.3.3.6]{Grisvard1985} to $\chi u$ in $W_0\cap U$ and utilizing \eqref{H^1_estimate}, we obtain
\begin{align*}
\|u\|_{H^2(W\cap U)}&\le\|\chi u\|_{H^2(W_0\cap U)}
\\
&\le \mathbf{c}_2\left(\|f\|_{L^2(U)}+\|u\|_{H^1(U)}+\|\partial_{\nu_g}u\|_{H^{1/2}(\mathcal{S})}\right)
\\
&\le \mathbf{c}_2\left(\|f\|_{L^2(U)}+\|\mathfrak{a}\|_{H^{1/2}(\mathcal{S})}+\mathbf{c}_0\|q\|_{C^{0,1}(\mathcal{S})}\|u\|_{H^{1/2}(\mathcal{S})}\right)
\\
& \le \mathbf{c}_2\left(\|f\|_{L^2(U)}+\|\mathfrak{a}\|_{H^{1/2}(\mathcal{S})}\right),
\end{align*}
where $\mathbf{c}_2=\mathbf{c}_2(g,p,q,B,W)>0$ is a generic constant.
\end{proof}

We are now ready to prove Theorems \ref{thm_single_log_ifp} and \ref{thm_Lipschitz_ifp}, respectively. 

\begin{proof}[Proof of Theorem \ref{thm_single_log_ifp}]

Let $D := \Omega \setminus \overline{B}$, $\mathfrak{a} \in H^{1/2}(\mathcal{S})$ and $u = u(0,\mathfrak{a}) \in H^2(D)$. According to \cite[Theorem 1.1]{CT2024b}, for $3/2 < \eta < 2$, there exist constants $\mathbf{c} = \mathbf{c}(g, p, B, \Omega, \eta) > 0$ and $c = c(g, p, B, \Omega, \eta) > 0$ such that for all $s \geq 1$, we have the inequality
\[
\mathbf{c}\|u\|_{H^\eta (D)}\le e^{cs}\mathcal{C}(u)+s^{-(2-\eta)/2}\|u\|_{H^2(D)}.
\]

Henceforth, $\mathbf{c}=\mathbf{c}(g,p,B,\Omega,\eta)>0$ will denote a generic constant. Since the trace mapping
\[
H^\eta(D)\ni u\mapsto (u_{|\mathcal{S}},\partial_{\nu_g}u_{|\mathcal{S}})\in L^2(\mathcal{S})\times L^2(\mathcal{S})
\]
is bounded, we can derive the following inequality:
\[
\mathbf{c}\left(\|u\|_{L^2(\mathcal{S})}+\|\partial_{\nu_g}u\|_{L^2(\mathcal{S})}\right)\le e^{cs}\mathcal{C}(u)+s^{-(2-\eta)/2}\|u\|_{H^2(D)},\quad s\ge 1.
\]

By using $\mathfrak{a}=\partial_{\nu_g}u+qu$ and applying \eqref{H^k_estimate} with $k = 0$, we obtain the following inequality:
\[
\mathbf{c}\|\mathfrak{a}\|_{L^2(\mathcal{S})}\le e^{cs}\mathcal{C}(u)+s^{-(2-\eta)/2}\|\mathfrak{a}\|_{H^{1/2}(\mathcal{S})},\quad s\ge 1,
\]
where $\mathbf{c}$ depends also on $\|q\|_{L^\infty(\mathcal{S})}$. By replacing $(2-\eta)/2$ with $\eta$, the expected inequality follows.
\end{proof}

\begin{proof}[Proof of Theorem \ref{thm_Lipschitz_ifp}]
By Theorem \ref{thm_single_log_ifp} with $\eta := 1/8$, we deduce that
\begin{align*}
\mathbf{c}\|\mathfrak{a}\|_{L^2(\mathcal{S})}&\le e^{cs}\mathcal{C}(u)+s^{-1/8}\|\mathfrak{a}\|_{H^{1/2}(\mathcal{S})}
\\
&\le e^{cs}\mathcal{C}(u)+s^{-1/8}\|\mathfrak{a}\|_{H^1(\mathcal{S})},\quad s\ge 1,
\end{align*}
where $\mathbf{c}(\zeta) > 0$ and $c(\zeta) > 0$ are constants as specified in Theorem \ref{thm_single_log_ifp}. Given that $\mathfrak{a} \in \mathcal{A}$, we observe that
\begin{align*}
\|\mathfrak{a}\|_{H^1(\mathcal{S})}&\le \|\mathfrak{a}\|_{L^2(\mathcal{S})}+\|\nabla_{\tau_g}\mathfrak{a}\|_{L^2(\mathcal{S})}
\\
&\le (1+M)\|\mathfrak{a}\|_{L^2(\mathcal{S})}.
\end{align*}
For sufficiently large $s \geq 1$, we can assume that $\mathbf{c}(\zeta) - (1 + M)s^{-1/8} \geq \mathbf{c}(\zeta)/2$, allowing us to rearrange the inequality as
\[
\mathbf{c}\|\mathfrak{a}\|_{L^2(\mathcal{S})}\le e^{cs}\mathcal{C}(u),\quad s\ge 1,
\]
by modifying the constant $\mathbf{c}$ depending also on $M$. The proof completed by using again $\mathfrak{a}\in\mathcal{A}$.
\end{proof}

\section{Inverse boundary coefficient problem}\label{se3}
In this section, we delve into an application of the aforementioned Theorems \ref{thm_single_log_ifp} and \ref{thm_Lipschitz_ifp} in the context of an inverse boundary coefficient problem. Throughout the subsequent discussion, it is assumed that all functions are real-valued for simplicity. The reason is that, while a parallel argument can be made for complex-valued functions by decomposing them into their real and imaginary parts, this leads to rather cumbersome notation.

The geometric setup for the problem we intend to investigate is rooted in the description provided in \eqref{BVP}, but it is considered as an interior boundary value problem. For clarity, we recall the relevant details below.
Let $n\ge 2$ be an integer, $B\subset\mathbb{R}^n$ be a smooth bounded open set and $\Omega\Supset B$ be a smooth bounded domain such that $D:=\Omega\setminus\overline{B}$ is connected. Set $\mathcal{S}:=\partial B$, $\Gamma:=\partial \Omega$ so that $\partial D=\mathcal{S}\cup \Gamma$ and define the set
\[
\mathscr{Q}_k:=\{q\in C^{k,1}(\partial D);\; q\ge 0\; \text{and}\; q\not\equiv 0\},\quad k\in\mathbb{N}\cup\{0\}.
\]

Consider the boundary-value problem
\begin{equation}\label{BVP_2}
\begin{cases}
-\Delta_g u=f\quad &\text{in}\; D,
\\
\partial_{\nu_g} u+q u=\mathfrak{a}\quad &\text{on}\; \partial D,
\end{cases}
\end{equation}
where $f\in L^2(D)$, $q\in\mathscr{Q}_0$ and $\mathfrak{a}\in H^{1/2}(\partial D)$.

In the case where $\mathrm{supp}(\mathfrak{a}) \subset \Gamma$ and $\mathrm{supp}(q) \subset \mathcal{S}$, the boundary-value problem \eqref{BVP_2} can be considered as a standard model in non-destructive testing. Here, the coefficient $q$ represents the characteristic of the inaccessible part of the boundary, such as regions where corrosion may have occurred. In practical applications, measurements are typically conducted on $\Gamma$, which constitutes the accessible portion of the boundary.

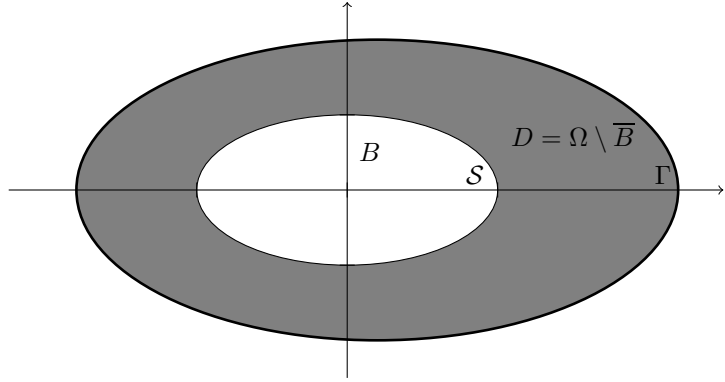
\begin{figure}[htbp]
\centering
\begin{tikzpicture}
\draw[fill=gray, fill opacity=0.5, line width=1pt] (0.4,0) ellipse (4cm and 2cm); 
\draw[fill=white ] (0,0) ellipse (2cm and 1cm); 

\node at (0.3,0.5) {$B$};
\node at (1.7,0.2) {$\mathcal{S}$};
\node at (3,0.7) {$D=\Omega\setminus \overline{B}$};    
\node at (4.2,0.2) {$\Gamma$};

\draw[->] (-4.5,0) -- (5,0); 
\draw[->] (0,-2.5) -- (0,2.5); 
\foreach \x in {-2,0,2}
\draw (\x,-0.1) -- (\x,0.1); 
\foreach \y in {-1,1}
\draw (-0.1,\y) -- (0.1,\y); 
\end{tikzpicture}
\caption{Illustration of the domain for Equation \eqref{BVP_2}. $B\subset\mathbb{R}^n$, $\Omega\Supset B$ and $D:=\Omega\setminus\overline{B}$. The boundaries are $\mathcal{S}:=\partial B$ and
$\Gamma:=\partial \Omega$ so that $\partial D=\mathcal{S}\cup \Gamma$.}
\end{figure}

Using an argument similar to that in the proof of Lemma \ref{well-posedness}, and considering the equivalent scalar product $h$ defined in $H^1(D)$ by
\[
h(u,v):=\int_D \langle\nabla_g u,\nabla_g v\rangle dV_g+\int_{\partial D}quvdS_g,\quad u,v\in H^1(D),
\]
(see, for instance, \cite[Lemma 2.9]{Choulli2016} for a similar approach), we can conclude that the boundary-value problem \eqref{BVP_2} admits a unique solution $u := u_q(f, \mathfrak{a}) \in H^2(D)$.

Fix an arbitrary $\kappa > 0$ and define the set
\[
\mathscr{Q}:=\{q\in\mathscr{Q}_k;\; \|q\|_{C^{k,1}(\partial D)}\le  \kappa\}.
\]
The inverse coefficient problem tackled in this section aims to reconstruct the restriction $q_{|\mathcal{S}}$ of the unknown function $q$, belonging to the set $\mathscr{Q}$, near the boundary $\mathcal{S}$. This reconstruction is based on the knowledge of the boundary data
\[
(u_q(f,\mathfrak{a})_{|\Gamma},\partial_{\nu_g} u_q(f,\mathfrak{a})_{|\Gamma})\in H^{3/2}(\Gamma)\times H^{1/2}(\Gamma).
\]

There are numerous stability results associated to this inverse problem. For logarithmic stability estimates, encompassing single, double, and triple logarithms, readers are directed to the following references: \cite{Alessandrini2003, Chaabane2004, Cheng2008, Choulli2018, Sincich2015, Choulli2016a, Choulli2016}. For stronger stability estimates, Choulli \cite[Theorem 1.2]{Choulli2002} establishes a H\"{o}lder type stability estimate when observation data are collected on the same boundary as the unknown function. Additionally, Hu and Yamamoto \cite{Hu2015} demonstrate a H\"{o}lder type stability estimate in the two-dimensional case when the unknown functions are constrained to be analytic or piecewise constant. Chaabane and Jaoua \cite{Chaabane1999} establish a Lipschitz type stability estimate under a monotony assumption concerning the unknown coefficients, while Sincich \cite{Sincich2007} proves another Lipschitz type stability estimate when the unknown coefficients are assumed to be piecewise constant functions. Furthermore, Yang, Choulli, and Cheng \cite{Yang2005} propose a numerical scheme employing the boundary-element method for the specific scenario where $D$ is an annulus.

In this section, we first apply Theorem \ref{thm_Lipschitz_ifp} to derive a Lipschitz stability estimate for the aforementioned inverse coefficient problem, provided that the coefficient $q$ fulfills specific constraints. For clarity and consistency, we reuse the notation defined as follows:
\[
\mathcal{C}(w)=\|w\|_{H^1(\Gamma)}+\|\partial_{\nu_g} w\|_{L^2(\Gamma)},\quad w\in H^2(D)
\]
and we denote $\varsigma:=(g,\kappa,k,B,\Omega,f,\mathfrak{a})$.

\begin{thm}\label{thm_Lipschitz_iccp}
Let $k>n/2$ be an integer and $K>0$. Assume that 
$(g_{\ell m})\in W^{k+1,\infty}(\mathbb{R}^n;\mathbb{R}^{n\times n})$. Fix $(f,\mathfrak{a})\in H^k(D)\times H^{k+1/2}(\partial D)\setminus\{(0,0)\}$ such that  $f\ge 0$ and $\mathfrak{a}\ge 0$. Then, there exists a constant $\mathbf{c}=\mathbf{c}(\varsigma,K)>0$ such that for all $q_j\in\mathscr{Q}$ and $u_j=u_{q_j}(f,\mathfrak{a})\in H^{k+2}(D)$ satisfying \eqref{BVP_2} for $j=1,2$, there holds
\[
\mathbf{c}\|q_1-q_2\|_{H^1(\mathcal{S})}\le \mathcal{C}(u_1-u_2),
\]
provided that the coefficients $q_1, q_2$ satisfy the condition 
\begin{equation}\label{constraint}
\|\nabla_{\tau_g}(q_1-q_2)\|_{L^2(\mathcal{S})}\le K\|q_1-q_2\|_{L^2(\mathcal{S})}.
\end{equation}
\end{thm}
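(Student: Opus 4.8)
The plan is to reduce the inverse coefficient problem to the inverse flux problem of Theorem \ref{thm_Lipschitz_ifp} (in its interior version, as stated in the Remark) by working with the difference $w := u_1 - u_2$. Since $u_1, u_2$ solve \eqref{BVP_2} with the same $(f,\mathfrak{a})$, I obtain $-\Delta_g w = 0$ in $D$, while on $\partial D$ the algebraic identity $q_1 u_1 - q_2 u_2 = q_1 w + (q_1-q_2)u_2$ gives
\[
\partial_{\nu_g} w + q_1 w = -(q_1 - q_2) u_2 =: \mathfrak{b} \quad\text{on } \partial D.
\]
Viewing $\mathfrak{b}_{|\mathcal{S}} = -(q_1-q_2)u_2$ as a flux on $\mathcal{S}$, I would apply the interior analogue of Theorem \ref{thm_Lipschitz_ifp} to $w$ to get $\mathbf{c}\|\mathfrak{b}\|_{H^1(\mathcal{S})} \le \mathcal{C}(w) = \mathcal{C}(u_1 - u_2)$, and then transfer this to $q_1 - q_2$ using the positivity of $u_2$. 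Two facts must be supplied: that $\mathfrak{b}_{|\mathcal{S}}$ lies in the admissible set $\mathcal{A}$ for a fixed $M = M(\varsigma,K)$, and a two-sided comparison between $\|\mathfrak{b}\|_{H^1(\mathcal{S})}$ and $\|q_1-q_2\|_{H^1(\mathcal{S})}$.

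The decisive ingredient is a \emph{uniform} positive lower bound $u_2 \ge c_0(\varsigma) > 0$ on $\mathcal{S}$, valid for every $q_2 \in \mathscr{Q}$. For fixed admissible $q_2$, since $-\Delta_g u_2 = f \ge 0$ the function $u_2$ is $g$-superharmonic and attains its minimum on $\partial D$; at a putative nonpositive boundary minimum the Hopf lemma and the Robin condition $\partial_{\nu_g}u_2 + q_2 u_2 = \mathfrak{a} \ge 0$ with $q_2 \ge 0$ yield a contradiction, so $u_2 > 0$ on $\mathcal{S}$. To make this uniform over the \emph{non-compact} class $\mathscr{Q}$ I argue by contradiction and compactness: a minimizing sequence $q_n$ is precompact in $C^k(\partial D)$ (Arzel\`a--Ascoli, via the $C^{k,1}$ bound $\kappa$), with limit $q_*$ of norm $\le\kappa$. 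If $q_* \not\equiv 0$, continuity of $q \mapsto u_q$ into $H^{k+2}(D) \hookrightarrow C^1(\overline D)$ (this is where $k > n/2$ enters) forces $\min_{\mathcal{S}} u_{q_n} \to \min_{\mathcal{S}} u_{q_*} > 0$. The degenerate case $q_* \equiv 0$ is controlled by the divergence identity $\int_{\partial D} q\, u_q\, dS_g = \int_{\partial D}\mathfrak{a}\, dS_g + \int_D f\, dV_g =: I_0 > 0$, valid for every $q \in \mathscr{Q}$, which forces $\max u_{q_n} \to \infty$; normalizing $\hat u_n := u_{q_n}/\max u_{q_n}$ and using the energy identity gives $\hat u_n \to 1$ in $C^1(\overline D)$, so in fact $\min_{\mathcal{S}} u_{q_n} \to \infty$. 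The same dichotomy produces uniform \emph{scale-invariant} bounds $\max_{\mathcal{S}} u_q / \min_{\mathcal{S}} u_q \le R(\varsigma)$, $\|\nabla_{\tau_g} u_q\|_{L^\infty(\mathcal{S})}/\min_{\mathcal{S}} u_q \le S(\varsigma)$, and $\|u_q^{-1}\|_{C^{0,1}(\mathcal{S})} \le C_1(\varsigma)$; these survive the blow-up precisely because $u_q$ behaves like a large constant as $q \to 0$.

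With these bounds in hand the remaining steps are short. Writing $\phi := q_1 - q_2$, the product rule $\nabla_{\tau_g}(\phi u_2) = u_2 \nabla_{\tau_g}\phi + \phi \nabla_{\tau_g} u_2$, the constraint \eqref{constraint}, and $\|\mathfrak{b}\|_{L^2(\mathcal{S})} \ge (\min_{\mathcal{S}} u_2)\|\phi\|_{L^2(\mathcal{S})}$ give $\|\nabla_{\tau_g}\mathfrak{b}\|_{L^2(\mathcal{S})} \le (RK + S)\|\mathfrak{b}\|_{L^2(\mathcal{S})}$, so $\mathfrak{b}_{|\mathcal{S}} \in \mathcal{A}$ with $M := RK + S$. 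For the reverse comparison, $\phi = -\mathfrak{b}\, u_2^{-1}$ and boundedness of multiplication by $u_2^{-1} \in C^{0,1}(\mathcal{S})$ on $H^1(\mathcal{S})$ yield $\|\phi\|_{H^1(\mathcal{S})} \le C\,C_1\,\|\mathfrak{b}\|_{H^1(\mathcal{S})}$; chaining with the flux estimate proves the theorem. Besides the uniform positivity above, the main obstacle I anticipate is the correct application of the flux theorem to $w$: unlike the exterior problem, $w$ also carries inhomogeneous Robin data on $\Gamma$, so the a priori term $\|w\|_{H^2(D)}$ entering the underlying quantitative continuation estimate is not bounded by $\|\mathfrak{b}\|_{H^{1/2}(\mathcal{S})}$ alone. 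It must instead be estimated through $\mathfrak{b}$ on all of $\partial D$, with the $\Gamma$-contribution absorbed into the measured Cauchy data $\mathcal{C}(w)$ (hence into the $e^{cs}\mathcal{C}(w)$ term) before optimizing in $s$, exactly as in the absorption step of the proof of Theorem \ref{thm_Lipschitz_ifp}.
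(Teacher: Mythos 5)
Your proposal follows exactly the paper's reduction: set $w:=u_1-u_2$, note that $\Delta_g w=0$ in $D$ with $\partial_{\nu_g}w+q_1w=\mathfrak{b}:=-(q_1-q_2)u_2$ on $\partial D$, verify $\mathfrak{b}_{|\mathcal{S}}\in\mathcal{A}$ for an $M$ depending only on $\varsigma$ and $K$, invoke Theorem \ref{thm_Lipschitz_ifp}, and return to $q_1-q_2$ via the strict positivity of $u_2$ together with \eqref{constraint}. Where you genuinely diverge is in the auxiliary bounds on $u_q$. The paper gets the uniform lower bound \eqref{lower_bound} with a one-line comparison argument: letting $u_\kappa$ be the solution of \eqref{BVP_2} with the \emph{constant} coefficient $q\equiv\kappa$, the difference $u_q-u_\kappa$ solves a Robin problem with nonnegative boundary data $(\kappa-q)u_\kappa\ge 0$, so the maximum principle gives $u_q\ge u_\kappa\ge\mathfrak{c}_2>0$ for every $q\in\mathscr{Q}$ at once, with an explicit constant and no compactness. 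Your Arzel\`a--Ascoli dichotomy (convergence to $q_*\not\equiv 0$ versus blow-up when $q_*\equiv 0$) reaches the same conclusion but is far heavier, and it leaves two substantial steps as sketches: the continuity of $q\mapsto u_q$ (whose modulus degenerates as $q\to 0$, which is precisely why you need the separate blow-up branch) and the $C^1$ convergence of the normalized sequence $\hat u_n\to 1$. On the other hand, your insistence on \emph{scale-invariant} ratios $\max u_q/\min u_q$ and $\|\nabla_{\tau_g}u_q\|_{L^\infty}/\min u_q$ rather than the absolute bound $\|u_q\|_{C^2(\overline D)}\le\mathfrak{c}_1$ of \eqref{upper_bound} is not gratuitous: your own divergence identity $\int_{\partial D}q\,u_q\,dS_g=\int_{\partial D}\mathfrak{a}\,dS_g+\int_D f\,dV_g$ shows that $u_q$ becomes unboundedly large as $q\to 0$ within $\mathscr{Q}$ (which contains such $q$), so the uniformity of \eqref{upper_bound} over all of $\mathscr{Q}$ is a point the paper passes over; your version is the more defensible one. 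Likewise, your closing remark about the inhomogeneous Robin data carried on $\Gamma$ --- so that the $H^2(D)$ a priori term in the underlying continuation estimate must be controlled through $\mathfrak{b}$ on all of $\partial D$ and not only on $\mathcal{S}$ --- identifies a real subtlety in transplanting Theorem \ref{thm_Lipschitz_ifp} from the exterior to the interior setting that the paper's proof does not make explicit. In short: same skeleton, a slicker key lemma in the paper, a more cautious (but less complete) treatment of uniformity in yours.
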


Before proving Theorem \ref{thm_Lipschitz_iccp}, we present some of its consequences. Let $(\lambda_m)$ be the sequence of eigenvalues of the Laplace-Beltrami operator $-\Delta_{\tau_g}$ on $\mathcal{S}$, satisfying
\[
0 = \lambda_0 \leq \lambda_1 \leq \lambda_2 \leq \cdots \quad \text{and} \quad \lim_{m \to \infty} \lambda_m = \infty.
\]
Let $(\phi_m)$ be an orthonormal basis of $L^2(\mathcal{S})$ consisting of eigenfunctions, where each $\phi_m$ corresponds to the eigenvalue $\lambda_m$. For all $\lambda > 0$, we define
\[
W_\lambda = \mathrm{span}\{\phi_m;\; \lambda_m \leq \lambda\}.
\]

\begin{cor}\label{cor_finite_dim}
Let $\lambda >0$ be fixed. There exists a constant $\mathbf{c}=\mathbf{c}(\varsigma,\lambda)>0$ such that for all $q_j\in \mathscr{Q}$, $u_j=u_{q_j}(f,\mathfrak{a})\in H^{k+2}(D)$ satisfying \eqref{BVP_2} for $j=1,2$ we have
\[
\mathbf{c}\|q_1-q_2\|_{H^1(\mathcal{S})}\le \mathcal{C}(u_1-u_2),
\]
provided that $q_1-q_2\in W_\lambda$.
\end{cor}

\begin{proof}
Since
\[
q_1 - q_2 = \sum_{\lambda_m \leq \lambda} a_m \phi_m,
\]
where $a_m := (q_1 - q_2, \phi_m)_{L^2(\mathcal{S})}$, we obtain
\[
\|\nabla_{\tau_g}(q_1 - q_2)\|_{L^2(\mathcal{S})} = \sqrt{\sum_{\lambda_m \leq \lambda}\lambda_m |a_m|^2} \leq \sqrt{\lambda} \|q_1 - q_2\|_{L^2(\mathcal{S})}.
\]
The proof is completed by utilizing \eqref{constraint} and Theorem \ref{thm_Lipschitz_iccp}.
\end{proof}

If $q$ is known on the boundary $\Gamma$, the previous result can be improved, as demonstrated by the following corollary.

\begin{cor}
Let $\lambda>0$ be fixed. There exists a constant $\mathbf{c} = \mathbf{c}(\varsigma, \lambda) > 0$ such that, for all $q_j \in \mathscr{Q}$ and $u_j = u_{q_j}(f, \mathfrak{a}) \in H^{k+2}(D)$ satisfying \eqref{BVP_2} for $j = 1, 2$ we have
\[
\mathbf{c} \|q_1 - q_2\|_{H^1(\mathcal{S})} \leq \|u_1 - u_2\|_{H^1(\Gamma)},
\]
provided that $q_1 - q_2 \in W_\lambda$ and $q_1 \equiv q_2$ on $\Gamma$.
\end{cor}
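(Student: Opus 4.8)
The plan is to deduce this refined estimate directly from Corollary \ref{cor_finite_dim} by exploiting the extra hypothesis $q_1\equiv q_2$ on $\Gamma$. The point of that hypothesis is that it forces the difference $w:=u_1-u_2$ to satisfy a \emph{homogeneous} Robin condition on the accessible boundary $\Gamma$, which in turn lets me absorb the Neumann term $\|\partial_{\nu_g}w\|_{L^2(\Gamma)}$ occurring in $\mathcal{C}(w)$ into the Dirichlet trace $\|w\|_{H^1(\Gamma)}$. First I would write out the boundary conditions: since $u_j=u_{q_j}(f,\mathfrak{a})$ solves \eqref{BVP_2}, we have $\partial_{\nu_g}u_j+q_ju_j=\mathfrak{a}$ on $\partial D$ for $j=1,2$. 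Restricting to $\Gamma$, subtracting the two identities, and writing $q$ for the common restriction of $q_1$ and $q_2$ to $\Gamma$ (they agree there by assumption), I obtain
\[
\partial_{\nu_g}w+qw=0\quad\text{on }\Gamma,
\]
that is, $\partial_{\nu_g}w=-qw$ on $\Gamma$.

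Next I would estimate the Neumann data. Since $q_j\in\mathscr{Q}$ yields $\|q\|_{L^\infty(\Gamma)}\le\|q\|_{C^{k,1}(\partial D)}\le\kappa$, the preceding identity gives
\[
\|\partial_{\nu_g}w\|_{L^2(\Gamma)}=\|qw\|_{L^2(\Gamma)}\le\kappa\|w\|_{L^2(\Gamma)}\le\kappa\|w\|_{H^1(\Gamma)},
\]
whence
\[
\mathcal{C}(w)=\|w\|_{H^1(\Gamma)}+\|\partial_{\nu_g}w\|_{L^2(\Gamma)}\le(1+\kappa)\|w\|_{H^1(\Gamma)}.
\]

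Finally, because $q_1-q_2\in W_\lambda$, Corollary \ref{cor_finite_dim} furnishes a constant $\mathbf{c}=\mathbf{c}(\varsigma,\lambda)>0$ with $\mathbf{c}\|q_1-q_2\|_{H^1(\mathcal{S})}\le\mathcal{C}(w)$. Chaining this with the bound just obtained and relabelling the constant (the factor $1+\kappa$ is harmless, as $\kappa$ is already among the parameters comprising $\varsigma$) produces the claimed inequality $\mathbf{c}\|q_1-q_2\|_{H^1(\mathcal{S})}\le\|u_1-u_2\|_{H^1(\Gamma)}$. I do not anticipate any genuine obstacle: the entire content of the improvement is the observation that $q_1\equiv q_2$ on $\Gamma$ is precisely what cancels the inhomogeneity $\mathfrak{a}$ in the Robin condition for $w$, turning the Neumann trace on $\Gamma$ into a bounded multiple of the Dirichlet trace and thereby rendering the second term of $\mathcal{C}$ redundant.
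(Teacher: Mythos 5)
Your proof is correct and follows essentially the same route as the paper: both arguments observe that $q_1\equiv q_2$ on $\Gamma$ gives the homogeneous Robin condition $\partial_{\nu_g}(u_1-u_2)+q_1(u_1-u_2)=0$ on $\Gamma$, deduce $\mathcal{C}(u_1-u_2)\le(1+\kappa)\|u_1-u_2\|_{H^1(\Gamma)}$, and then invoke Corollary \ref{cor_finite_dim}. No issues.
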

\begin{proof}
Since $\partial_{\nu_g}(u_1 - u_2) + q_1(u_1 - u_2) = 0$ holds on $\Gamma$, we have
\[
\mathcal{C}(u_1-u_2)\le (1+\kappa)\|u_1-u_2\|_{H^1(\Gamma)},
\]
which concludes the proof.
\end{proof}

The following lemma will be utilized in the proof of Theorem \ref{thm_Lipschitz_iccp}.

\begin{lem}
Let $k>n/2$ be an integer. Assume that 
$(g_{\ell m})\in W^{k+1,\infty}(\mathbb{R}^n;\mathbb{R}^{n\times n})$. Fix $(f,\mathfrak{a})\in H^k(D)\times H^{k+1/2}(\partial D)\setminus\{(0,0)\}$ such that $f\ge 0$ and $\mathfrak{a}\ge 0$. Then, there exist constants $\mathfrak{c}_1=\mathfrak{c}_1(\varsigma)>0$ and $\mathfrak{c}_2=\mathfrak{c}_2(\varsigma)>0$ such that for all $q\in\mathscr{Q}$ and $u_q=u_{q}(f,\mathfrak{a})\in H^{k+2}(D)$ satisfying \eqref{BVP_2}, there hold
\begin{equation}\label{upper_bound}
\|u_q\|_{C^2(\overline{D})}\le \mathfrak{c}_1
\end{equation}
and
\begin{equation}\label{lower_bound}
\min_{\overline{D}}u_q\ge \mathfrak{c}_2\; (>0).
\end{equation}
\end{lem}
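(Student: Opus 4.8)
The plan is to derive \eqref{upper_bound} from elliptic regularity together with a Sobolev embedding, and \eqref{lower_bound} from the maximum principle together with a compactness argument. For the upper bound I would first record the interior counterpart of the higher-order estimate \eqref{H^k_estimate} for the Robin problem \eqref{BVP_2}: arguing as in Lemma \ref{well-posedness} (localization, flattening of the boundary, and the estimates of \cite{Grisvard1985}) one gets, for $k>n/2$,
\[
\|u_q\|_{H^{k+2}(D)} \le C\big(\|f\|_{H^k(D)} + \|\mathfrak{a}\|_{H^{k+1/2}(\partial D)} + \|u_q\|_{H^1(D)}\big),
\]
where $C$ depends on $q$ only through $\|q\|_{C^{k,1}(\partial D)}\le\kappa$ and is therefore uniform over $\mathscr{Q}$. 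Since $k>n/2$ forces $k+2>n/2+2$, the embedding $H^{k+2}(D)\hookrightarrow C^2(\overline{D})$ is continuous, so once $\|u_q\|_{H^1(D)}$ is controlled uniformly the bound \eqref{upper_bound} follows at once.

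It thus remains to bound $\|u_q\|_{H^1(D)}$ by a constant independent of $q\in\mathscr{Q}$. Testing the weak formulation of \eqref{BVP_2} with $v=u_q$ gives
\[
\int_D |\nabla_g u_q|_g^2\, dV_g + \int_{\partial D} q\, u_q^2\, dS_g = \int_D f u_q\, dV_g + \int_{\partial D}\mathfrak{a}\, u_q\, dS_g,
\]
so the estimate reduces to a coercivity bound for the form $h$ that is uniform over $\mathscr{Q}$. This uniform coercivity is the main obstacle: as $q$ becomes small the boundary term $\int_{\partial D} q\, u_q^2\, dS_g$ degenerates, and the pointwise condition $q\not\equiv0$ alone does not keep the Poincar\'e-type constant bounded below. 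The relevant mechanism is the identity obtained by testing \eqref{BVP_2} with $v=1$,
\[
\int_{\partial D} q\, u_q\, dS_g = \int_D f\, dV_g + \int_{\partial D}\mathfrak{a}\, dS_g =: I,
\]
where $I>0$ because $f,\mathfrak{a}\ge0$ and $(f,\mathfrak{a})\neq(0,0)$; this shows the limiting Neumann problem $q\equiv0$ violates its solvability condition, and it is precisely this obstruction (together with a lower control of $q$) that has to be exploited to exclude the degeneration and close the uniform energy estimate.

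For \eqref{lower_bound} I would first prove the pointwise positivity $u_q>0$ on $\overline{D}$ for each fixed $q$. Since $-\Delta_g u_q=f\ge0$, the function $u_q$ is superharmonic, so by the strong minimum principle it is either constant or attains its minimum $m$ only on $\partial D$. If $u_q$ is non-constant with $m=u_q(x_0)$, $x_0\in\partial D$, Hopf's lemma gives $\partial_{\nu_g}u_q(x_0)<0$; combined with the boundary condition $\partial_{\nu_g}u_q(x_0)=\mathfrak{a}(x_0)-q(x_0)m$ and the signs $\mathfrak{a}\ge0$, $q\ge0$, this forces $m>0$. If $u_q$ is constant then $f\equiv0$, hence $\mathfrak{a}\not\equiv0$, and $q\,m=\mathfrak{a}$ with $q\ge0$, $q\not\equiv0$ again yields $m>0$. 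To upgrade this to the uniform bound I would use \eqref{upper_bound}: the family $\{u_q:q\in\mathscr{Q}\}$ is then equicontinuous, so if $\inf_q\min_{\overline{D}}u_q=0$ one could extract $q_j\to q_*$ in $C^0(\partial D)$ and $u_{q_j}\to u_*$ in $C^1(\overline{D})$ with $\min_{\overline{D}}u_*=0$, where $u_*$ solves \eqref{BVP_2} with coefficient $q_*$; the same maximum-principle/Hopf dichotomy applied to $u_*$ contradicts $\min u_*=0$, the degenerate case $q_*\equiv0$ being excluded since the Neumann problem is unsolvable by $I>0$. The decisive difficulty throughout is therefore the uniform energy/coercivity estimate and the control of the small-$q$ degeneration; once that is secured, both \eqref{upper_bound} and \eqref{lower_bound} follow from standard regularity, embedding, and maximum-principle arguments.
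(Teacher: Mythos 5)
Your reduction of \eqref{upper_bound} to a $q$-uniform $H^1$ bound is the right skeleton, but you leave that bound unproven, and the gap is not repairable for $\mathscr{Q}$ as defined: the very identity you write, $\int_{\partial D} q\,u_q\, dS_g = \int_D f\, dV_g + \int_{\partial D}\mathfrak{a}\, dS_g =: I > 0$, shows that for the constant coefficients $q\equiv\epsilon$ one has $\int_{\partial D}u_\epsilon\, dS_g = I/\epsilon\to\infty$ as $\epsilon\to 0$, so $\|u_q\|_{C^0(\overline{D})}$ cannot be bounded uniformly over $\mathscr{Q}$. What you have found is therefore an obstruction, not a mechanism that could ``close the uniform energy estimate''; some lower control on $q$, absent from the definition of $\mathscr{Q}$, would be needed. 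For what it is worth, the paper's own proof does not confront this point either: it simply asserts that the analogue of \eqref{H^k_estimate} holds for \eqref{BVP_2} with a constant depending only on $\kappa$, which hides exactly the same degeneration of the coercivity constant of $h$ as $q\to 0$. So your diagnosis of where the difficulty lies is accurate, but the step is missing in your argument (and, arguably, questionable in the statement itself).

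For \eqref{lower_bound} your pointwise positivity argument (strong minimum principle plus Hopf's lemma at a boundary minimum) is exactly the paper's, but your passage to a \emph{uniform} constant by compactness is both circular and unnecessary. Circular, because the equicontinuity of the family $\{u_q\}$ from which you extract subsequences is precisely the unproven bound \eqref{upper_bound}. Unnecessary, because the paper obtains the uniform constant by a direct comparison: let $u_\kappa$ be the solution of \eqref{BVP_2} with $q\equiv\kappa$; then $\min_{\overline{D}}u_\kappa>0$ by your own Hopf argument, and $v:=u_q-u_\kappa$ satisfies $\Delta_g v=0$ in $D$ with $\partial_{\nu_g}v+qv=(\kappa-q)u_\kappa\ge 0$ on $\partial D$ (since $0\le q\le\kappa$ on $\partial D$), so the same minimum-principle argument gives $v\ge 0$ and hence $\min_{\overline{D}}u_q\ge\min_{\overline{D}}u_\kappa=:\mathfrak{c}_2$, with no compactness, no limit coefficient $q_*$ to worry about, and no use of \eqref{upper_bound}. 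You should replace your compactness step by this monotonicity-in-$q$ comparison.
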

\begin{proof}
Similarly to Lemma \ref{well-posedness}, we prove that \eqref{BVP_2} admits a unique solution $u_q = u_q(f, \mathfrak{a}) \in H^{k+2}(D)$ and that \eqref{H^k_estimate} holds with $W=\Omega$ by replacing $(U,\mathcal{S})$ with $(D,\partial D)$. This, combined with the fact that the embedding $H^{k+2}(D) \hookrightarrow C^2(\overline{D})$ is continuous, implies that $u_q \in C^2(\overline{D})$ and demonstrates that \eqref{upper_bound} is satisfied.

Let $u_\kappa:=u_\kappa(f,\mathfrak{a})\in C^2(\overline{D})$ be the unique solution of \eqref{BVP_2} with $q:=\kappa$. We will prove that $\min_{\overline{D}}u_\kappa>0$. In the case where $u_\kappa$ is equal to a constant, it is nonzero by $(f,\mathfrak{a})\neq (0,0)$. Therefore, from $\kappa u_\kappa=\mathfrak{a}\ge 0$ on $\partial D$, we deduce that $\mathfrak{a}$ and $u_\kappa=\mathfrak{a}/\kappa>0$ are both equal to positive constants. When $u_\kappa$ is not equal to a constant, $u_\kappa$ reaches its minimum at some $x_\kappa \in \partial D$ according to the strong maximum principle (e.g., \cite[Theorem 3.5]{Gilbarg2001}). We have $u_\kappa(x_\kappa)>0$ because otherwise we will have
\[
\partial_{\nu_g}u_\kappa(x_\kappa)=-\kappa u_\kappa(x_\kappa)+\mathfrak{a}(x_\kappa)\ge 0,
\]
which contradicts $\partial_{\nu_g}u_\kappa(x_\kappa)<0$ by \cite[Lemma 3.1]{Ammari2020}.

Next, we verify that $v:=u_q-u_\kappa$ satisfies
\[
\begin{cases}
\Delta_g v=0\quad &\text{in}\; D,
\\
\partial_{\nu_g} v+q v=(\kappa-q)u_\kappa\quad &\text{on}\; \partial D.
\end{cases}
\]
Since $(\kappa-q)u_\kappa\ge \left(\kappa-\|q\|_{C(\partial D)}\right)u_\kappa(x_\kappa)\ge 0$ on $\partial D$, proceeding as above, we obtain $v\ge 0$ in $\overline{D}$. Thus, $\min_{\overline{D}}u_q\ge\min_{\overline{D}}u_\kappa>0$, which means that \eqref{lower_bound} holds with $\mathfrak{c}_2:=\min_{\overline{D}}u_\kappa>0$.
\end{proof}

With the above established lemma, we are now ready to proceed with the proof of Theorem \ref{thm_Lipschitz_iccp}.

\begin{proof}[Proof of Theorem \ref{thm_Lipschitz_iccp}]
Let $q_j\in\mathscr{Q}$ and $u_j=u_{q_j}(f,\mathfrak{a})$ for $j=1,2$. Then, the function $v:=u_1-u_2$ satisfies the system
\[
\begin{cases}
\Delta_g v=0\quad &\text{in}\; D,
\\
\partial_{\nu_g} v+q_1 v=\mathfrak{b}\quad &\text{on}\; \partial D,
\end{cases}
\]
where $\mathfrak{b}:=(q_2-q_1)u_2$. To apply Theorem \ref{thm_Lipschitz_ifp}, we will check that $\mathfrak{b}\in\mathcal{A}$ for some $M>0$. Using \eqref{upper_bound} and \eqref{constraint}, we derive the following estimates
\begin{align*}
\|\nabla_{\tau_g}\mathfrak{b}\|_{L^2(\mathcal{S})}&\le \mathfrak{c}_1\left(\|q_2-q_1\|_{L^2(\mathcal{S})}+\|\nabla_{\tau_g}(q_2-q_1)\|_{L^2(\mathcal{S})}\right)
\\
&\le \mathfrak{c}_1(1+K)\|q_1-q_2\|_{L^2(\mathcal{S})},
\end{align*}
where $\mathfrak{c}_1=\mathfrak{c}_1(\varsigma)>0$ is the constant as in \eqref{upper_bound}. 

On the other hand, from \eqref{lower_bound}, there exists a constant $\mathfrak{c}_2 = \mathfrak{c}_2(\varsigma) > 0$ such that $\min_{\overline{D}} u_2 \geq \mathfrak{c}_2$. Furthermore, we have
\begin{equation}\label{q_1-q_2_ineq}
\|\mathfrak{b}\|_{L^2(\mathcal{S})} \geq \mathfrak{c}_2 \|q_1 - q_2\|_{L^2(\mathcal{S})}.
\end{equation}
Combining these inequalities, we obtain
\[
\|\nabla_{\tau_g} \mathfrak{b}\|_{L^2(\mathcal{S})} \leq \mathfrak{c}_1 \mathfrak{c}_2^{-1} (1 + K) \|\mathfrak{b}\|_{L^2(\mathcal{S})},
\]
which implies that $\mathfrak{b} \in \mathcal{A}$ with $M := \mathfrak{c}_1 \mathfrak{c}_2^{-1} (1 + K) > 0$. Therefore, Theorem \ref{thm_Lipschitz_ifp} implies that
\[
\mathbf{c} \|\mathfrak{b}\|_{H^1(\mathcal{S})} \leq \mathcal{C}(v)
\]
for some constant $\mathbf{c} = \mathbf{c}(\varsigma, K) > 0$. By adjusting the constant $\mathbf{c}$, the inequality above and \eqref{q_1-q_2_ineq} yield
\[
\mathbf{c} \|q_1 - q_2\|_{L^2(\mathcal{S})} \leq \mathcal{C}(v).
\]
Finally, the proof is completed by applying \eqref{constraint} again.
\end{proof}

Finally, let us demonstrate that, despite the absence of the necessary assumption \eqref{constraint}, we can nevertheless establish a logarithmic-type stability estimate.

\begin{thm}\label{thm_log_iccp}
Let $k>n/2$ be an integer and $0<\eta<1/4$.  Assume that 
$(g_{\ell m})\in W^{k+1,\infty}(\mathbb{R}^n;\mathbb{R}^{n\times n})$. Fix $(f,\mathfrak{a})\in H^k(D)\times H^{k+1/2}(\partial D)\setminus\{(0,0)\}$ such that $f\ge 0$ and $\mathfrak{a}\ge 0$. Then there exist constants $\mathbf{c}=\mathbf{c}(\varsigma,\eta)>0$ and $c=c(g,\kappa,B,\Omega,\eta)>0$ such that for all $q_j\in\mathscr{Q}$ with $u_j=u_{q_j}(f,\mathfrak{a})\in H^{k+2}(D)$ satisfying \eqref{BVP_2} for $j=1,2$,  and $s\ge 1$ there holds
\[
\mathbf{c}\|q_1-q_2\|_{L^2(\mathcal{S})}\le e^{cs}\mathcal{C}(u_1-u_2)+s^{-\eta}.
\]
\end{thm}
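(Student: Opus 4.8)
The plan is to follow the proof of Theorem~\ref{thm_Lipschitz_iccp} almost verbatim, but to invoke the logarithmic estimate of Theorem~\ref{thm_single_log_ifp} in place of the Lipschitz estimate of Theorem~\ref{thm_Lipschitz_ifp}. Thus I would again set $v := u_1 - u_2$ and $\mathfrak{b} := (q_2 - q_1)u_2$, and note that $v$ solves $\Delta_g v = 0$ in $D$ with $\partial_{\nu_g} v + q_1 v = \mathfrak{b}$ on $\partial D$. Applying the interior form of Theorem~\ref{thm_single_log_ifp} to the pair $(v, \mathfrak{b})$ (here the bulk coefficient is $p=0$ and the boundary coefficient is $q_1 \in \mathscr{Q}$) yields, for every $s \ge 1$,
\[
\mathbf{c}\,\|\mathfrak{b}\|_{L^2(\mathcal{S})} \le e^{cs}\mathcal{C}(v) + s^{-\eta}\|\mathfrak{b}\|_{H^{1/2}(\mathcal{S})}.
\]
Because the exponential rate $c$ originates from the quantitative continuation estimate \cite[Theorem~1.1]{CT2024b}, which is insensitive to $q_1$, and the remaining $q_1$-dependence enters only through $\|q_1\|_{L^\infty(\mathcal{S})} \le \kappa$, both $\mathbf{c}$ and $c$ can be taken uniform over $q_1 \in \mathscr{Q}$ and independent of $(f, \mathfrak{a})$, consistent with the asserted dependence $c = c(g, \kappa, B, \Omega, \eta)$.

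The one genuinely new ingredient, and the step I expect to be the crux, is the treatment of the last term: without hypothesis~\eqref{constraint} I can no longer show $\mathfrak{b} \in \mathcal{A}$, so instead of absorbing $\|\mathfrak{b}\|_{H^{1/2}(\mathcal{S})}$ into the left-hand side I bound it a priori by a constant independent of $q_1, q_2$. Using the multiplication inequality~\eqref{boundedness} with $t = 1/2$ (so that $\lfloor t\rfloor = 0$), the uniform bound $\|q_1 - q_2\|_{C^{0,1}(\mathcal{S})} \le 2\kappa$ furnished by $q_j \in \mathscr{Q}$, and the $C^2$-bound~\eqref{upper_bound} together with the continuity of the trace map $C^2(\overline{D}) \to H^{1/2}(\mathcal{S})$, I obtain
\[
\|\mathfrak{b}\|_{H^{1/2}(\mathcal{S})} \le \mathbf{c}_0\|q_1 - q_2\|_{C^{0,1}(\mathcal{S})}\|u_2\|_{H^{1/2}(\mathcal{S})} \le C_0
\]
for some $C_0 = C_0(\varsigma) > 0$. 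Hence the right-hand side of the interpolation inequality is at most $e^{cs}\mathcal{C}(v) + C_0 s^{-\eta}$.

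Finally, I would recover the left-hand side exactly as in~\eqref{q_1-q_2_ineq}: the lower bound~\eqref{lower_bound} gives $\min_{\overline{D}} u_2 \ge \mathfrak{c}_2 > 0$, whence the pointwise identity $\mathfrak{b} = (q_2 - q_1)u_2$ on $\mathcal{S}$ forces $\|\mathfrak{b}\|_{L^2(\mathcal{S})} \ge \mathfrak{c}_2\|q_1 - q_2\|_{L^2(\mathcal{S})}$. Combining the two bounds and dividing through by $\max(1, C_0) \ge 1$---which keeps the coefficient of $\mathcal{C}(v)$ no larger than $e^{cs}$ while reducing the coefficient of $s^{-\eta}$ to at most $1$---absorbs $\mathfrak{c}_2$, $C_0$ and the constant from Theorem~\ref{thm_single_log_ifp} into a single $\mathbf{c} = \mathbf{c}(\varsigma, \eta) > 0$ and produces
\[
\mathbf{c}\,\|q_1 - q_2\|_{L^2(\mathcal{S})} \le e^{cs}\mathcal{C}(u_1 - u_2) + s^{-\eta},
\]
as desired. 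Apart from this $H^{1/2}$-bookkeeping, the argument is routine, since all the analytic difficulty has already been discharged into Theorem~\ref{thm_single_log_ifp} and the upper and lower bounds of the preceding lemma; the only other point to verify carefully is that none of the absorbed constants reintroduces an $(f,\mathfrak{a})$-dependence into the rate $c$.
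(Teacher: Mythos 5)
Your proposal is correct and follows essentially the same route as the paper: both reduce to the boundary-value problem for $v=u_1-u_2$ with flux $\mathfrak{b}=(q_2-q_1)u_2$, apply the (interior form of) Theorem~\ref{thm_single_log_ifp}, bound $\|\mathfrak{b}\|_{H^{1/2}(\mathcal{S})}$ uniformly via \eqref{boundedness} and \eqref{upper_bound}, and recover $\|q_1-q_2\|_{L^2(\mathcal{S})}$ from \eqref{q_1-q_2_ineq}. Your write-up merely spells out the constant bookkeeping that the paper leaves implicit.
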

\begin{proof}
Let $q_j \in \mathscr{Q}$ and $u_j = u_{q_j}(f, \mathfrak{a})$ for $j = 1, 2$. Then, the function $v := u_1 - u_2$ is the solution to the boundary-value problem
\[
\begin{cases}
\Delta_g v = 0 & \text{in} \; D, \\
\partial_{\nu_g} v + q_1 v = \mathfrak{b} & \text{on} \; \partial D,
\end{cases}
\]
where $\mathfrak{b} := (q_2 - q_1)u_2$. By applying Theorem \ref{thm_single_log_ifp}, we obtain the inequality
\[
\mathbf{c} \|\mathfrak{b}\|_{L^2(\mathcal{S})} \leq e^{cs} \mathcal{C}(v) + s^{-\eta} \|\mathfrak{b}\|_{H^{1/2}(\mathcal{S})}, \quad s \geq 1,
\]
where $\mathbf{c} = \mathbf{c}(g, \kappa, B, \Omega, \eta) > 0$ and $c = c(g, \kappa, B, \Omega, \eta) > 0$ are the constants defined in Theorem \ref{thm_single_log_ifp}. Combining this inequality with \eqref{q_1-q_2_ineq}, \eqref{boundedness}, and \eqref{upper_bound}, we can derive the desired inequality by possibly adjusting the constant $\mathbf{c} = \mathbf{c}(\varsigma, \eta) > 0$ if necessary.
\end{proof}

Implementing a similar approach as before, we minimize the interpolation inequality stated in Theorem \ref{thm_log_iccp} with respect to $s \geq 1$ to derive the following corollary. Recall the definition of the function:
\[
\Phi_{\eta,c} (r)=r^{-1}\chi_{]0,e^c]}(r)+(\log r)^{-\eta}\chi_{]e^c,\infty[}(r),\quad 0<\eta<1/4,\; c>0.
\]

\begin{cor} 
Let the assumptions stated in Theorem \ref{thm_log_iccp} be satisfied. Then, there exist constants $\mathbf{c} = \mathbf{c}(\varsigma, \eta) > 0$ and $c = c(g, \kappa, B, \Omega, \eta) > 0$ such that for all $q_j \in \mathscr{Q}$ and $u_j = u_{q_j}(f, \mathfrak{a}) \in H^{k+2}(D)$ satisfying \eqref{BVP_2} for $j = 1, 2$, there holds the inequality:
\[
\mathbf{c} \|q_1 - q_2\|_{L^2(\mathcal{S})} \leq \Phi_{\eta, c} \left( \frac{1}{\mathcal{C}(u_1 - u_2)} \right).
\]
\end{cor}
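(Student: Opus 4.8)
The plan is to minimize the right-hand side of the interpolation inequality furnished by Theorem~\ref{thm_log_iccp} over $s \geq 1$, exactly as was done to pass from Theorem~\ref{thm_single_log_ifp} to its corollary. Write $E := \mathcal{C}(u_1 - u_2)$ and let $c_0$ denote the constant $c$ appearing in Theorem~\ref{thm_log_iccp}, so that
\[
\mathbf{c}\,\|q_1 - q_2\|_{L^2(\mathcal{S})} \leq e^{c_0 s} E + s^{-\eta}, \quad s \geq 1.
\]
I would then split into two regimes according to the size of $E$, which is precisely the dichotomy built into the definition of $\Phi_{\eta,c}$ through the threshold $r = e^c$ evaluated at $r = 1/E$.

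In the first regime, where $E$ is not small, I would use only the \emph{a priori} bound available from $q_1, q_2 \in \mathscr{Q}$: since $\|q_j\|_{C^{k,1}(\partial D)} \leq \kappa$, one has $\|q_1 - q_2\|_{L^2(\mathcal{S})} \leq C(\varsigma)$ for a constant depending only on $\varsigma$. On this regime $E \geq e^{-c_1}$ for the threshold $c_1$ fixed below, so that $\Phi_{\eta,c_1}(1/E) = E \geq e^{-c_1}$ is bounded below by a positive constant, and the \emph{a priori} bound then yields $\mathbf{c}\,\|q_1-q_2\|_{L^2(\mathcal{S})} \leq \Phi_{\eta,c_1}(1/E)$ after adjusting $\mathbf{c}$.

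In the complementary regime $E < e^{-c_1}$, I would make the near-optimal choice $s = \tfrac{1}{2c_0}\log(1/E)$, which satisfies $s > 1$ precisely when $E < e^{-2c_0}$; this fixes the threshold $c_1 := 2c_0$, whose dependence $c_1 = c(g,\kappa,B,\Omega,\eta)$ is inherited from Theorem~\ref{thm_log_iccp}. With this choice $e^{c_0 s} E = E^{1/2}$ and $s^{-\eta} = (2c_0)^\eta (\log(1/E))^{-\eta}$, so the interpolation inequality becomes
\[
\mathbf{c}\,\|q_1 - q_2\|_{L^2(\mathcal{S})} \leq E^{1/2} + (2c_0)^\eta (\log(1/E))^{-\eta}.
\]
The one point requiring care — the mild obstacle here — is to absorb the term $E^{1/2}$ into the logarithmic term: one checks that $E \mapsto E^{1/2}(\log(1/E))^{\eta}$ extends continuously to $[0, e^{-2c_0}]$ with value $0$ at the origin, hence is bounded there, which gives $E^{1/2} \leq C(\log(1/E))^{-\eta}$ uniformly on this regime. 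Consequently $\mathbf{c}\,\|q_1-q_2\|_{L^2(\mathcal{S})} \leq C(\log(1/E))^{-\eta} = C\,\Phi_{\eta,c_1}(1/E)$.

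Finally I would combine the two regimes and absorb all multiplicative constants into a single $\mathbf{c} = \mathbf{c}(\varsigma,\eta)$, with $c = c_1 = 2c_0$. I expect no genuine difficulty beyond bookkeeping; the only step demanding a moment's thought is the uniform domination $E^{1/2} \lesssim (\log(1/E))^{-\eta}$, which rests on the elementary fact that any positive power of $E$ decays faster than any negative power of $\log(1/E)$ as $E \to 0^+$.
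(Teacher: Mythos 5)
Your proposal is correct and follows exactly the route the paper indicates (the paper gives no detailed proof, merely stating that the corollary follows by minimizing the interpolation inequality of Theorem \ref{thm_log_iccp} over $s\ge 1$): the dichotomy at $E=e^{-2c_0}$ matches the threshold in $\Phi_{\eta,c}$, the choice $s=\tfrac{1}{2c_0}\log(1/E)$ is admissible in the small-$E$ regime, and the absorption $E^{1/2}\lesssim(\log(1/E))^{-\eta}$ together with the a priori bound $\|q_1-q_2\|_{L^2(\mathcal{S})}\le 2\kappa\,|\mathcal{S}|_g^{1/2}$ in the large-$E$ regime closes the argument. No gaps beyond the degenerate case $\mathcal{C}(u_1-u_2)=0$, which is already implicit in the statement itself.
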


\section*{Acknowledgement}
This work was supported by National Key Research and Development Programs of China (No. 2023YFA1009103), JSPS KAKENHI (Grant Numbers JP23KK0049 and JP25K17280), NSFC (No. 11925104) and Science and Technology Commission of Shanghai Municipality (No. 23JC1400501).

\section*{Declarations}
\subsection*{Conflict of interest}
The authors declare that they have no conflict of interest.

\subsection*{Data availability}
Data sharing not applicable to this article as no datasets were generated or analyzed during the current study.

\printbibliography

@book{Gilbarg2001,
   author = {D. Gilbarg and N. S. Trudinger},
   doi = {10.1007/978-3-642-61798-0},
   isbn = {978-3-540-41160-4},
   pages = {xiv+517 pp},
   publisher = {Springer-Verlag, Berlin},
   title = {Elliptic Partial Differential Equations of Second Order},
   url = {http://link.springer.com/10.1007/978-3-642-61798-0},
   year = {2001}
}

@book{Choulli2016,
   author = {M. Choulli},
   isbn = {9783319336411},
   pages = {ix+81 pp},
   publisher = {Springer, [Cham]; BCAM Basque Center for Applied Mathematics, Bilbao},
   title = {Applications of Elliptic Carleman Inequalities to Cauchy and Inverse Problems},
   year = {2016}
}

@article{Hu2015,
   abstract = {This paper is concerned with the inverse problem of detecting a boundary corrosion coefficient which describes some corrosion index from a single pair of Cauchy data measured on an accessible boundary of an electrostatic conductor in two dimensions. The corroded portion is supposed to be either a line segment or a part of some circle, while the corrosion coefficient is restricted to be an analytic or a piecewise constant function. We prove two Hölder stability estimates in recovering the unknown boundary coefficient. Our arguments rely on the Schwarz reflection principle with the Robin boundary condition and a novel interior estimate derived from the elliptic Carleman estimate.},
   author = {G. Hu and M. Yamamoto},
   doi = {10.1088/0266-5611/31/11/115009},
   issn = {13616420},
   journal = {Inverse Problems},
   keywords = {Hölder stability,Robin boundary condition,corrosion detection,elliptic Carleman estimate,inverse problem,reflection principle},
   pages = {20 pp},
   publisher = {Institute of Physics Publishing},
   title = {Hölder stability estimate of Robin coefficient in corrosion detection with a single boundary measurement},
   volume = {31(11)},
   year = {2015}
}

@article{Cheng2008,
   abstract = {We prove a logarithmic stability estimate for a Cauchy problem associated with a second order elliptic operator. Our proof is essentially based on a Carleman estimate by A. L. Bukhgeim. This result is applied to establish a stability estimate for the inverse problem of determining a boundary coefficient (or a boundary function) by a single boundary measurement. This kind of inverse problems is motivated by the corrosion detection problem.},
   author = {J. Cheng and M. Choulli and J. Lin},
   journal = {Mathematical Models and Methods in Applied Sciences},
   keywords = {Carleman estimate,corrosion detection AMS Subject Classification: 35R30, 35J05,logarithmic stability estimate},
   pages = {107-123},
   title = {Stable determination of a boundary coefficient in an elliptic equation},
   volume = {18(1)},
   url = {www.worldscientific.com},
   year = {2008}
}

@book{Choulli,
   author = {M. Choulli},
   publisher = {to appear},
   title = {An Introduction to the Uniqueness of Continuation of Second Order Partial Differential Equations}
}

@article{Chaabane1999,
   abstract = {We consider the problem of determining the Robin coefficient of some specimen material, by performing measurements on some part of the boundary. An identifiability result is proved for Robin coefficients which are continuous functions with some negative lower bound. Both local and monotone global Lipschitz stability results are established. Finally, a cost function turning the inverse problem into an optimization one is proposed for numerical purposes. This function, which may be viewed as an energetic least-squares one, has an easy-to-compute G-derivative, which encourages us to consider implementing the gradient algorithm in forthcoming numerical experiments.},
   author = {S. Chaabane and M. Jaoua},
   journal = {Inverse Problems},
   pages = {1425-1438},
   title = {Identification of Robin coefficients by the means of boundary measurements},
   volume = {15},
   year = {1999}
}

@book{Grisvard1985,
   author = {P. Grisvard},
   pages = {xiv+410 pp},
   publisher = {Pitman (Advanced Publishing Program), Boston, MA},
   title = {Elliptic Problems in Nonsmooth Domains},
   url = {https://epubs.siam.org/terms-privacy},
   year = {1985}
}

@article{Choulli2002,
   abstract = {We prove stability estimates for an inverse elliptic problem consisting in the determination of an unknown boundary coefficient from an overdetermined data. We consider both the case of all boundary measurements and the case of single boundary measurement. After studying the problem in its generality we treat the special case in which the unknown boundary coefficient is supported in an hyperplane. We find in this latest case that the stability estimates are, in some sense, better than those in the general case.},
   author = {M. Choulli},
   journal = {J. Inv. Ill-Posed Problems},
   pages = {601-610},
   title = {Stability estimates for an inverse elliptic problem},
   volume = {10(6)},
   year = {2002}
}

@article{Choulli2018,
   abstract = {In this note we correct the proof of [2, Theorem 2.1].},
   author = {M. Choulli},
   doi = {10.1515/jiip-2017-0030},
   issn = {09280219},
   journal = {Journal of Inverse and Ill-Posed Problems},
   pages = {453-457},
   publisher = {Walter de Gruyter GmbH},
   title = {Corrigendum: An inverse problem in corrosion detection: Stability estimates},
   volume = {26(3)},
   year = {2018}
}

@article{CT2024b,
   abstract = {We prove a new quantitative uniqueness of continuation for elliptic equations from Cauchy data. We provide a simple and direct proof based only on a Carleman inequality. Similar result for the Stokes equation is also shown.},
   author = {M. Choulli and H. Takase},
   journal = {arXiv:2411.03545},
   title = {New quantitative uniqueness of continuation for elliptic equations},
   url = {http://arxiv.org/abs/2411.03545},
   year = {2024}
}

@article{Ammari2020,
   abstract = {We survey some of our recent results on inverse problems for evolution equations. The goal is to provide a unified approach to solve various types of evolution equations. The inverse problems we consider consist in determining unknown coefficients from boundary measurements by varying initial conditions. Based on observability inequalities and a special choice of initial conditions, we provide uniqueness and stability estimates for the recovery of volume and boundary lower order coefficients in wave and heat equations. Some of the results presented here are slightly improved from their original versions.},
   author = {K. Ammari and M. Choulli and F. Triki},
   doi = {10.4208/csiam-am.2020-0001},
   issn = {27080579},
   journal = {CSIAM Transactions on Applied Mathematics},
   keywords = {Evolution equations,Laplace-Beltrami operator,geometric control,initial-to-boundary operator,observability inequality},
   pages = {207-239},
   publisher = {Global Science Press},
   title = {A unified approach to solving some inverse problems for evolution equations by using observability inequalities},
   volume = {1(2)},
   year = {2020}
}

@article{Choulli2016a,
   abstract = {We establish a logarithmic stability estimate for the problem of detecting corrosion by a single electric measurement. We give a proof based on an adaptation of the method initiated in [3] for solving the inverse problem of recovering the surface impedance of an obstacle from the scattering amplitude. The key idea consists in estimating accurately a lower bound of the local L2-norm at the boundary, of the solution of the boundary value problem used in modeling the problem of detection corrosion by an electric measurement.},
   author = {M. Choulli and A. Jbalia},
   doi = {10.3934/dcdss.2016018},
   issn = {19371179},
   journal = {Discrete and Continuous Dynamical Systems - Series S},
   keywords = {Boundary measurement,Detecting corrosion,Logarithmic stability estimate},
   pages = {643-650},
   publisher = {American Institute of Mathematical Sciences},
   title = {The problem of detecting corrosion by an electric measurement revisited},
   volume = {9(3)},
   year = {2016}
}

@article{Alessandrini2003,
   abstract = {We prove an optimal stability estimate for an inverse Robin boundary value problem arising in corrosion detection by electrostatic boundary measurements.},
   author = {G. Alessandrini and L. D. Piero and L. Rondi},
   journal = {Inverse Problems},
   pages = {973-984},
   title = {Stable determination of corrosion by a single electrostatic boundary measurement},
   volume = {19},
   year = {2003}
}

@article{Chaabane2004,
   abstract = {We establish some global stability results together with logarithmic estimates in Sobolev norms for the inverse problem of recovering a Robin coefficient on part of the boundary of a smooth 2D domain from overdetermined measurements on the complementary part of a solution to the Laplace equation in the domain, using tools from analytic function theory.},
   author = {S. Chaabane and I. Fellah and M. Jaoua and J. Leblond},
   doi = {10.1088/0266-5611/20/1/003},
   issn = {02665611},
   journal = {Inverse Problems},
   pages = {47-59},
   title = {Logarithmic stability estimates for a Robin coefficient in two-dimensional Laplace inverse problems},
   volume = {20(1)},
   year = {2004}
}

@article{Sincich2007,
   abstract = {We are concerned with a problem arising in corrosion detection. We consider the stability issue for the inverse problem of determining a Robin coefficient on the inaccessible portion of the boundary by the electrostatic measurements performed on the accessible one. We provide a Lipschitz stability estimate under the further a priori assumption of a piecewise constant Robin coefficient. Furthermore, we prove that the Lipschitz constant of the above-mentioned estimate behaves exponentially with respect to the number of the portions considered. © 2007 IOP Publishing Ltd.},
   author = {E. Sincich},
   doi = {10.1088/0266-5611/23/3/027},
   issn = {02665611},
   journal = {Inverse Problems},
   pages = {1311-1326},
   title = {Lipschitz stability for the inverse Robin problem},
   volume = {23(3)},
   year = {2007}
}

@article{Sincich2015,
   abstract = {We consider the stability issue for the determination of a linear corrosion in a conductor by a single electrostatic measurement. We established a global log-log type stability when the corroded boundary is simply Lipschitz. We also improve such a result obtaining a global log stability by assuming that the damaged boundary is C1,1-smooth.},
   author = {E. Sincich},
   doi = {10.1016/j.jmaa.2014.10.036},
   issn = {10960813},
   journal = {Journal of Mathematical Analysis and Applications},
   keywords = {Boundary impedance,Corrosion detection,Global stability,Inverse problems},
   pages = {364-379},
   publisher = {Academic Press Inc.},
   title = {Smoothness dependent stability in corrosion detection},
   volume = {426(1)},
   year = {2015}
}

@article{Yang2005,
   abstract = {In this paper, we consider an inverse problem of determining the corrosion occurring in an inaccessible interior part of a pipe from the measurements on the outer boundary. The problem is modelled by Laplace's equation with an unknown term γ in the boundary condition on the inner boundary. Based on the Maz'ya iterative algorithm, a regularized BEM method is proposed for obtaining approximate solutions for this inverse problem. The numerical results show that our method can be easily realized and is quite effective.},
   author = {X. Yang and M. Choulli and J. Cheng},
   journal = {Numer. Math. J. Chinese Univ. (English Ser.)},
   keywords = {35R30,65M32,Corrosion,Cuachy problem,Laplace equation AMS(2000)subject classifications 35B30,iterative method},
   pages = {252-266},
   title = {An iterative BEM for the inverse problem of detecting corrosion in a pipe},
   volume = {14(3)},
   year = {2005}
}

@article{Choulli2025,
   author = {M. Choulli and H. Takase},
   doi = {10.1007/s40687-025-00504-y},
   issn = {2522-0144},
   journal = {Research in the Mathematical Sciences},
   pages = {13 pp},
   title = {Lipschitz stability for an elliptic inverse problem with two measurements},
   volume = {12(2), No. 22},
   url = {https://link.springer.com/10.1007/s40687-025-00504-y},
   year = {2025}
}

@book{Agranovich2015,
   author = {M. S. Agranovich},
   pages = {xiv+331 pp},
   publisher = {Springer, Cham},
   title = {Sobolev Spaces, Their Generalizations, and Elliptic Problems in Smooth and Lipschitz Domains},
   url = {http://www.springer.com/series/3733},
   year = {2015}
}

\end{document}